\theoremstyle{plain} 
\newtheorem{theorem}{\sc Theorem}[section]
\newtheorem{lemma}[theorem]{\sc Lemma}
\newtheorem{corollary}[theorem]{\sc Corollary}
\newtheorem{proposition}[theorem]{\sc Proposition}
\theoremstyle{definition}
\newtheorem{remark}[theorem]{\sc Remark}
\newtheorem{remarks}[theorem]{\sc Remarks}
\newcommand\bA{{\mathbb A}}
\newcommand\bC{{\mathbb C}}
\newcommand\bG{{\mathbb G}}
\newcommand\bZ{{\mathbb Z}}
\newcommand\bmu{{\mathbb \mu}}
\newcommand\cL{{\mathcal L}}
\newcommand\cO{{\mathcal O}}
\newcommand\aff{{\rm aff}}
\newcommand\gp{{\rm gp}}
\newcommand\id{{\rm id}}
\newcommand\red{{\rm red}}
\newcommand\Aut{{\rm Aut}}
\newcommand\End{{\rm End}}
\newcommand\GL{{\rm GL}}
\newcommand\Hom{{\rm Hom}}
\newcommand\Lie{{\rm Lie}}
\newcommand\Spec{{\rm Spec}}
\def\address#1#2{\begingroup
\noindent\parbox[t]{7.8cm}{%
\small{\scshape\ignorespaces#1}\par\vskip1ex
\noindent\small{\itshape E-mail address}%
\/: #2\par\vskip4ex}\hfill%
\endgroup}
\title{On automorphism groups of fiber bundles}
\author{Michel Brion}
\date{}
\begin{document}

\maketitle

\footnote{ 
2010 \textit{Mathematics Subject Classification}: 14L10, 14L15, 14L30.}

\begin{abstract}
We obtain analogues of classical results on automorphism groups 
of holomorphic fiber bundles, in the setting of group schemes. 
Also, we establish a lifting property of the connected automorphism group, 
for torsors under abelian varieties. These results will be applied to the 
study of homogeneous bundles over abelian varieties.
\end{abstract}

\section{Introduction}
\label{sec:introduction}

This work arose from a study of homogeneous bundles over an abelian 
variety $A$, that is, of those principal bundles with base $A$ and fiber  
an algebraic group $G$, that are isomorphic to all of their pull-backs 
by the translations of $A$ (see \cite{Br2}). In the process of that 
study, it became necessary to obtain algebro-geometric analogues of 
two classical results about automorphisms of fiber bundles in complex 
geometry. The first one, due to Morimoto (see \cite{Mo}), asserts that 
the equivariant automorphism group of a principal bundle over 
a compact complex manifold, with fiber a complex Lie group, 
is a complex Lie group as well. The second one, a result of Blanchard 
(see \cite{Bl}), states that a holomorphic action of a complex 
connected Lie group on the total space of a locally trivial fiber 
bundle of complex manifolds descends to a holomorphic action 
on the base, provided that the fiber is compact and connected.

Also, we needed to show the existence in the category of schemes
of certain fiber bundles associated to a $G$-torsor (or principal bundles)
$\pi: X \to Y$, where $G$ is a connected group scheme and $X, Y$ are algebraic 
schemes; namely, those fiber bundles  $X \times^G Z \to Y$ associated
to $G$-homogeneous varieties $Z$. Note that the fiber bundle associated
to an arbitrary $G$-scheme $Z$ exists in the category of algebraic 
spaces, but may fail to be a scheme (see \cite{Bi,KM}).

Finally, we were led to a lifting result which reduces the study of 
homogeneous bundles to the case that the structure group is linear,
and does not seem to have its holomorphic counterpart. It asserts 
that given a $G$-torsor $\pi : X \to Y$ where $G$ is an abelian variety 
and $X,Y$ are smooth complete algebraic varieties, the connected 
automorphism group of $X$ maps onto that of $Y$ under the homomorphism 
provided by the analogue of Blanchard's theorem.

In this paper, we present these preliminary results which may have
independent interest, with (hopefully) modest prerequisites.
Section 2 is devoted to a scheme-theoretic version of Blanchard's 
theorem: a proper morphism of schemes $\pi: X \to Y$ such that 
$\pi_*(\cO_X) = \cO_Y$ induces a homomorphism 
$\pi_* : \Aut^o(X) \to \Aut^o(Y)$ between the neutral components of 
the automorphism group schemes (Corollary \ref{cor:dir}). Our proof
is an adaptation of that given in \cite{Ak} in the setting of 
complex spaces. 

In Section 3, we consider a torsor $\pi : X \to Y$ under a connected
group scheme $G$, and show the existence of the associated 
fiber bundle $X \times^G G/H = X/H$ for any subgroup scheme 
$H \subset G$ (Theorem \ref{thm:fact}). As a consequence, 
$X \times^G Z$ exists when $Z$ is the total space of a $G$-torsor, 
or a group scheme where $G$ acts via a homomorphism (Corollary 
\ref{cor:join}). Another application of Theorem \ref{thm:fact} 
concerns the quasi-projectivity of torsors (Corollary \ref{cor:fact}); 
it builds on work of Raynaud, who showed e.g. the local 
quasi-projectivity of homogeneous spaces over a normal scheme 
(see \cite{Ra}).
 
The automorphism groups of torsors are studied in Section 4.
In particular, we obtain a version of Morimoto's theorem:
the equivariant automorphisms of a torsor over a proper scheme
form a group scheme, locally of finite type (Theorem 
\ref{thm:fin}). Here our proof, based on an equivariant completion
of the structure group, is quite different from the original one.
We also analyze the relative equivariant automorphism group of 
such a torsor; this yields a version of Chevalley's structure theorem 
for algebraic groups in that setting (Proposition \ref{prop:autrel}).

The final Section 5 contains a full description of relative 
equivariant automorphisms for torsors under abelian varieties 
(Proposition \ref{prop:alb}) and our lifting result for automorphisms 
of the base (Theorem \ref{thm:abel}).

\bigskip 

\noindent
{\bf Acknowledgements.} Many thanks to Ga\"el R\'emond for several
clarifying discussions, and special thanks to the referee for very 
helpful comments and corrections. In fact, the final step of the 
proof of Theorem \ref{thm:fact} is taken from the referee's report; 
the end of the proof of Corollary \ref{cor:dir}, and the proof of 
Corollary \ref{cor:join} (ii), closely follow his/her suggestions.

\bigskip 

\noindent
{\bf Notation and conventions.}
Throughout this article, we consider algebraic varieties, schemes,
and morphisms over an algebraically closed field $k$. Unless 
explicitly mentioned, we will assume that the considered schemes 
are of finite type over $k$ (such schemes are also called algebraic 
schemes). By a point of a scheme $X$, we will mean a closed point
unless explicitly mentioned.  
A \emph{variety} is an integral separated scheme.

We will use \cite{DG} as a general reference for group schemes.
Given such a group scheme $G$, we denote by $\mu_G : G \times G \to G$
the multiplication and by $e_G \in G(k)$ the neutral element. 
The neutral component of $G$ is denoted by $G^o$, and the Lie algebra 
by $\Lie(G)$. 

We recall that an \emph{action} of $G$ on a scheme $X$ is a morphism
$$
\alpha : G \times X \longrightarrow X, \quad 
(g,x) \longmapsto g \cdot x
$$
such that the composite map
$$
\CD
X @>{e_G \times  \id_X}>> G \times X @>{\alpha}>> X
\endCD
$$
is the identity, and the square
$$
\CD
G \times G \times X @>{\id_G \times \alpha}>> G \times X \\
@V{\mu_G \times \id_X}VV @V{\alpha}VV \\
G \times X @>{\alpha}>> X \\
\endCD
$$
commutes. We then say that $X$ is a $G$-\emph{scheme}.
A morphism $f : X \to Y$ between two $G$-schemes is 
called \emph{equivariant} if the square
$$
\CD
G \times X @>{\alpha}>> X \\
@V{\id_G \times f}VV @V{f}VV \\
G \times Y @>{\beta}>> Y\\
\endCD
$$
commutes (with the obvious notation). We then say that 
$f$ is a $G$-\emph{morphism}.

A smooth group scheme will be called an algebraic group.
By Chevalley's structure theorem (see \cite[Theorem 16]{Ro}, 
or \cite{Co} for a modern proof), every connected algebraic 
group $G$ has a largest closed connected normal affine subgroup 
$G_{\aff}$; moreover, the quotient $G/G_{\aff} =: A(G)$ is an abelian 
variety. This yields an exact sequence of connected algebraic groups
$$
1 \longrightarrow  G_{\aff}  \longrightarrow  G 
\longrightarrow A(G) \longrightarrow 1.
$$

\section{Descending automorphisms for fiber spaces}
\label{sec:desc}

We begin with the following scheme-theoretic version of
a result of Blanchard (see \cite[Section I.1]{Bl} and also 
\cite[Lemma 2.4.2]{Ak}).

\begin{proposition}\label{prop:blan}
Let $G$ be a connected group scheme, $X$ a $G$-scheme,
$Y$ a scheme, and $\pi : X \to Y$ a proper morphism such that 
$\pi_*(\cO_X) = \cO_Y$. Then there is a unique $G$-action on $Y$ 
such that $\pi$ is equivariant.
\end{proposition}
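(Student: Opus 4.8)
The plan is to descend $\alpha$ by factoring the morphism $\pi\circ\alpha\colon G\times X\to Y$ through $\Phi:=\id_G\times\pi\colon G\times X\to G\times Y$ and taking $\beta$ to be the resulting factor. The first thing I would check is that $\Phi$ inherits the hypotheses on $\pi$: being the base change of the proper morphism $\pi$ along the flat morphism $\mathrm{pr}_Y\colon G\times Y\to Y$, it is proper, and flat base change over $k$ yields $\Phi_*(\cO_{G\times X})=\cO_{G\times Y}$. Consequently $\Phi$ is surjective with geometrically connected fibers $\{g\}\times X_y$, and, the canonical map $\cO_{G\times Y}\to\Phi_*(\cO_{G\times X})$ being an isomorphism, $\Phi$ is an epimorphism of schemes. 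This epimorphism property immediately gives the uniqueness of $\beta$; moreover I would deduce the two action axioms for $\beta$ from those for $\alpha$ by the same device, precomposing the defining identity $\beta\circ(\id_G\times\pi)=\pi\circ\alpha$ with the epimorphisms $\pi$ and $\id_{G\times G}\times\pi$ (the latter again an epimorphism by flat base change), so that both sides of each axiom coincide after precomposition and hence coincide.

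The existence of $\beta$ would rest on a factorization criterion: if $\phi\colon X'\to Y'$ is proper with $\phi_*(\cO_{X'})=\cO_{Y'}$, then any morphism $f\colon X'\to Z$ that is constant on each fiber $\phi^{-1}(y')$ factors uniquely through $\phi$. I would prove this in the usual way: given $y'$ with $f(\phi^{-1}(y'))=\{z\}$ and an affine open $V\ni z$, properness makes $\phi(f^{-1}(Z\setminus V))$ a closed subset of $Y'$ avoiding $y'$, so over a neighborhood $W$ of $y'$ the morphism $f$ takes values in $V$ and is encoded by a ring map $\cO(V)\to\Gamma(\phi^{-1}(W),\cO_{X'})=\Gamma(W,\cO_{Y'})$; these local factorizations glue, uniqueness coming from surjectivity of $\phi$. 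Applied to $\phi=\Phi$ and $f=\pi\circ\alpha$, this reduces the whole problem to showing that $\pi\circ\alpha$ is constant on every fiber $\{g\}\times X_y$ of $\Phi$; equivalently, that $\pi\circ\alpha_g$ is constant on each fiber $X_y$ of $\pi$, for every $g\in G(k)$.

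This last constancy is the heart of the matter, and the step I expect to be the main obstacle. I would establish it by rigidity, exploiting that each fiber $X_y$ is proper and geometrically connected (a consequence of $\pi$ being proper with $\pi_*(\cO_X)=\cO_Y$) together with the connectedness of $G$. Fixing $y$, set $F\colon G\times X_y\to Y$, $(g,x)\mapsto\pi(g\cdot x)$; since $\alpha(e_G,-)=\id_X$, the restriction of $F$ to $\{e_G\}\times X_y$ is the constant map with value $y$. Let $S_y\subseteq G$ be the locus of those $g$ for which $F$ is constant on $\{g\}\times X_y$. It is open: if $g_0\in S_y$ has value $y_0$ and $V\ni y_0$ is affine, then, $\mathrm{pr}_G\colon G\times X_y\to G$ being proper (as $X_y$ is proper), $F$ maps $\{g\}\times X_y$ into $V$ for all $g$ in a neighborhood of $g_0$, and a proper connected scheme mapping to an affine one is contracted to a point. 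Since $e_G\in S_y$ and $G$ is connected, it remains only to see that $S_y$ is closed.

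For the complement of $S_y$ to be open I would use that the diagonal $\Delta_Y$ is closed: if $g_1\notin S_y$, choose $x,x'\in X_y$ with $\pi(g_1\cdot x)\neq\pi(g_1\cdot x')$; then $g\mapsto(\pi(g\cdot x),\pi(g\cdot x'))$ is a morphism $G\to Y\times Y$ sending $g_1$ outside $\Delta_Y$, hence sending a whole neighborhood of $g_1$ outside $\Delta_Y$, so that $F$ is non-constant on $\{g\}\times X_y$ there. Thus $S_y$ is closed and open, whence $S_y=G$; letting $y$ vary gives the required constancy. The factorization criterion then produces $\beta\colon G\times Y\to Y$ with $\beta\circ(\id_G\times\pi)=\pi\circ\alpha$, i.e. $\pi$ is equivariant, and the axioms verified in the first step show that $\beta$ is a genuine action. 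Beyond the rigidity argument itself, the only delicate point I anticipate is this appeal to closedness of $\Delta_Y$ (automatic in the complex-analytic prototype, where the base is Hausdorff), which for a non-separated base would require extra care.
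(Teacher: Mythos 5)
Your proof is correct and rests on the same two pillars as the paper's: a rigidity argument showing that $\pi\circ\alpha$ is constant on the fibers of $\id_G\times\pi$, and the hypothesis $\pi_*(\cO_X)=\cO_Y$ to descend the resulting map. The packaging, however, genuinely differs in two places. Where the paper builds $\beta$ by hand as a morphism of locally ringed spaces --- an action of the abstract group $G(k)$ on closed points, a continuity check, and an explicit sheaf map $\beta^{\#}$ --- you isolate a general factorization criterion (a morphism out of the source of a proper morphism with connected Stein factorization, constant on fibers, factors uniquely through it) and apply it to $\id_G\times\pi$, whose properness and $\cO$-connectedness you correctly obtain by flat base change; uniqueness and the two action axioms then fall out of the observation that such morphisms are scheme epimorphisms, which is a cleaner version of the paper's ``the analogous square for $X$ commutes and $\pi_*(\cO_X)=\cO_Y$.'' And where the paper cites Mumford's rigidity lemma as a black box (applied to $G_{\red}\times F_y\to Y$), you prove the needed contraction directly, showing the locus $S_y\subset G$ is open (the standard proper-connected-to-affine contraction, which is fine since $\cO(F_y)$ is a finite local $k$-algebra) and closed, then invoking connectedness of $G$. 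The caveat you rightly flag --- closedness of $S_y$ uses that $\Delta_Y$ is closed, i.e.\ that $Y$ is separated --- is not a defect relative to the paper: Mumford's lemma is stated for varieties, so the paper's appeal to it carries the same implicit separatedness (and reducedness) hypotheses on the target, and in every application of the proposition $Y$ is proper. Your version has the merit of making that dependence explicit.
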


\begin{proof}
We will consider a scheme $Z$ as the ringed space $(Z(k), \cO_Z)$ 
where the set $Z(k)$ is equipped with the Zariski topology; this makes 
sense as $Z$ is of finite type. 
  
 We first claim that the abstract group $G(k)$ permutes
the fibers of $\pi : X(k) \to Y(k)$ (note that these fibers are non-empty 
and connected, since $\pi_*(\cO_X) = \cO_Y$). Let $y \in Y(k)$ and denote 
by $F_y$ the set-theoretic fiber of $\pi$ at $y$, viewed as a closed
reduced subscheme of $X$. Then the map
$$
\varphi : G_{\red} \times F_y \longrightarrow Y, \quad 
(g,x) \longmapsto \pi(g\cdot x)
$$
maps $\{ e_G \} \times F_y$ to the point $y$. Moreover, $G_{\red}$ 
is a variety, and $F_y$ is connected and proper. By the rigidity lemma 
(see \cite[p.~43]{Mu}), it follows that $\varphi$ maps 
$\{ g \} \times F_y$ to a point for any $g \in G(k)$, i.e., 
$g \cdot F_y \subset F_{g \cdot y}$. Thus, 
$g^{-1} \cdot F_{g \cdot y} \subset F_y$ and hence 
$g \cdot F_y = F_{g \cdot y}$. This implies our claim.

That claim yields a commutative square
$$
\CD
G(k) \times X(k) @>{\alpha}>> X(k) \\
@V{\id_G \times  \pi}VV @V{\pi}VV \\
G(k) \times Y(k) @>{\beta}>> Y(k),\\
\endCD
$$
where $\beta$ is an action of the (abstract) group $G(k)$.

Next, we show that $\beta$ is continuous. It suffices to
show that $\beta^{-1}(Z)$ is closed for any closed subset 
$Z \subset Y(k)$. But 
$(\id_G, \pi)^{-1} \beta^{-1}(Z) =\alpha^{-1} \pi^{-1}(Z)$
is closed, and $(\id_G,\pi)$ is proper and surjective; this yields 
our assertion.

Finally, we define a morphism of sheaves of $k$-algebras
$$
\beta^{\#} : \cO_Y \longrightarrow \beta_*(\cO_{G \times Y}).
$$
For this, to any open subset $V \subset Y$, we associate
a homomorphism of algebras
$$
\beta^{\#}(V) : \cO_Y(V) \longrightarrow 
\cO_{G \times Y} \big( \beta^{-1}(V) \big).
$$
By assumption, the left-hand side is isomorphic to 
$\cO_X \big( \pi^{-1}(V) \big)$, and the right-hand side to 
$$
\cO_{G \times X}\big( (\id_G,\pi)^{-1}\beta^{-1}(V) \big) =
\cO_{G \times X}\big( \alpha^{-1}\pi^{-1}(V) \big).
$$
We define
$\beta^{\#}(V) := \alpha^{\#} \big( \pi^{-1}(V) \big)$.
Now it is straightforward to verify that 
$(\beta,\beta^{\#})$ is a morphism of locally ringed spaces;
this yields a morphism of schemes $\beta : G \times Y \to Y$.
By construction, $\beta$ is the unique morphism such that 
the square 
$$
\CD
G \times X @>{\alpha}>> X \\
@V{\id_G \times \pi}VV @V{\pi}VV \\
G \times Y @>{\beta}>> Y\\
\endCD
$$
commutes. 

It remains to show that $\beta$ is an action of the
group scheme $G$. Note that $e_G$ acts on $X(k)$ via 
the identity; moreover, the composite morphism of sheaves
$$
\CD
\cO_Y @>{\beta^{\#}}>> \beta_*(\cO_{G \times Y}) 
@>{(e_G \times \id_Y)^{\#}}>> \beta_*( \cO_{\{e_G\} \times Y})  
\cong \cO_Y
\endCD
$$ 
is the identity, since so is the analogous morphism 
$$
\CD
\cO_X @>{\alpha^{\#}}>> \alpha_*(\cO_{G \times X}) 
@>{(e_G \times \id_X)^{\#}}>> \alpha_* (\cO_{\{e_G\} \times X}) 
\cong \cO_X
\endCD
$$
and $\pi_*(\cO_X) = \cO_Y$. Likewise, the square
$$
\CD
G \times G \times Y @>{\id_G \times \beta}>> G \times Y \\
@V{\mu_G \times \id_Y}VV @V{\beta}VV \\
G \times Y @>{\beta}>> Y \\
\endCD
$$
commutes on closed points, and the corresponding square 
of morphisms of sheaves commutes as well, since the analogous
square with $Y$ replaced by $X$ commutes.
\end{proof}

This proposition will imply a result of descent for group scheme
actions, analogous to \cite[Proposition I.1]{Bl} (see also 
\cite[Proposition 2.4.1]{Ak}). To state that result, we need 
some recollections on automorphism functors.
 
Given a scheme $S$, we denote by $\Aut_S(X \times S)$ the group of 
automorphisms of $X \times S$ viewed as a scheme over $S$. The assignement 
$S \mapsto \Aut_S(X \times S)$ yields a group functor $Aut(X)$, i.e., 
a contravariant functor from the category of schemes to that of groups. 
If $X$ is proper, then $Aut(X)$ is represented by a group scheme $\Aut(X)$, 
locally of finite type (see \cite[Theorem 3.7]{MO}). In particular, 
the neutral component $\Aut^o(X)$ is a group scheme of finite type.
Also, recall that
\begin{equation}\label{eqn:lie}
\Lie \, \Aut(X) \cong \Gamma(X,T_X)
\end{equation}
where the right-hand side denotes the Lie algebra of global vector
fields on $X$, that is, of derivations of $\cO_X$.

We now are in a position to state:

\begin{corollary}\label{cor:dir}
Let $\pi : X \to Y$ be a morphism of proper schemes such that 
$\pi_* (\cO_X) = \cO_Y$. Then $\pi$ induces a homomorphism of group 
schemes 
$$
\pi_* : \Aut^o(X) \longrightarrow \Aut^o(Y).
$$ 
\end{corollary}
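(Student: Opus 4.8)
The plan is to apply Proposition \ref{prop:blan} with a universal family of automorphisms, and then verify that the resulting action on $Y$ defines the desired homomorphism. Concretely, set $G := \Aut^o(X)$, which is a group scheme of finite type by the discussion following Proposition \ref{prop:blan}, and which is connected by definition. The tautological action of $\Aut^o(X)$ on $X$ is a morphism $\alpha : G \times X \to X$, making $X$ into a $G$-scheme. Now I would like to apply Proposition \ref{prop:blan} to this $G$-action together with the morphism $\pi$. The hypotheses match exactly: $G$ is a connected group scheme, $X$ is a $G$-scheme, $Y$ is a scheme, and $\pi : X \to Y$ is proper with $\pi_*(\cO_X) = \cO_Y$. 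The proposition then furnishes a unique $G$-action $\beta : G \times Y \to Y$ for which $\pi$ is equivariant.

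The next step is to translate this $G$-action on $Y$ into a homomorphism of group schemes $\pi_* : \Aut^o(X) \to \Aut^o(Y)$. An action $\beta$ of a group scheme $G$ on a proper scheme $Y$ is the same datum as a homomorphism $G \to \Aut(Y)$, by the universal property defining the functor $Aut(Y)$ and its representability (here one uses that $Y$ is proper, so $\Aut(Y)$ exists as a group scheme locally of finite type). Thus $\beta$ corresponds to a homomorphism $\rho : G \to \Aut(Y)$. Since $G = \Aut^o(X)$ is connected and $\rho$ is a morphism of group schemes sending $e_G$ to the identity automorphism, its image lands in the neutral component $\Aut^o(Y)$. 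This is the required homomorphism $\pi_* = \rho$. The equivariance of $\pi$ recorded in Proposition \ref{prop:blan} is precisely the statement that $\pi \circ \varphi = \rho(\varphi) \circ \pi$ for each automorphism $\varphi \in \Aut^o(X)$, so $\pi_*$ has the expected geometric meaning.

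The one point requiring care, and the place I expect the only real friction, is the dictionary between group scheme actions and homomorphisms into the automorphism group scheme. For a single proper scheme this is standard, but here I must check it functorially in the base $S$: an action $\beta : G \times Y \to Y$ gives, for every scheme $S$ and every $S$-point $g \in G(S)$, an $S$-automorphism of $Y \times S$, compatibly with base change, and this is exactly a natural transformation $G \to Aut(Y)$ of group functors, hence a morphism $G \to \Aut(Y)$ by Yoneda and representability. I would verify that the association is bijective (an inverse is given by composing $G \to \Aut(Y)$ with the universal automorphism over $\Aut(Y)$) and that it respects the group laws, so that $\rho$ is genuinely a homomorphism and not merely a morphism of schemes. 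Once this equivalence is in place, the corollary follows formally, with no further geometric input beyond Proposition \ref{prop:blan}.
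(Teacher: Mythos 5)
Your proposal is correct and follows essentially the same route as the paper: apply Proposition \ref{prop:blan} with $G := \Aut^o(X)$ acting tautologically on $X$, convert the resulting action $\beta$ on $Y$ into a morphism $G \to \Aut(Y)$ via the representability of $Aut(Y)$, and use connectedness of $G$ together with $\rho(e_G) = \id$ to land in $\Aut^o(Y)$. The point you flag as needing care --- that the action-to-homomorphism dictionary is functorial in $S$ and respects group laws --- is exactly the point the paper disposes of by noting that $\pi_*$ ``corresponds to the $G$-action on $Y$, and hence yields a morphism of group functors.''
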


\begin{proof}
This is a formal consequence of Proposition \ref{prop:blan}.
Specifically, let $G := \Aut^o(X)$ and consider the 
$G$-action on $Y$ obtained in that proposition. This yields 
a automorphism of $Y \times G$ as a scheme over $G$,
$$
(y,g) \longmapsto (g \cdot y, g),
$$
and in turn a morphism (of schemes) 
$$
\pi_* : G \longrightarrow \Aut(Y).
$$
Moreover, $\pi_*(e_G) = e_{\Aut(Y)}$ since $e_G$ acts 
via the identity. As $G$ is connected, it follows that
the image of $\pi_*$ is contained in $\Aut^o(Y) =: H$.
In other words, we have a morphism of schemes
$\pi_* : G \to H$ such that $\pi_*(e_G) = e_H$. It remains
to check that $\pi_*$ is a homomorphism; but this follows
from the fact that $\pi_*$ corresponds to the $G$-action
on $Y$, and hence yields a morphism of group functors.
\end{proof}

Given two complete varieties $X$ and $Y$, the preceding corollary
applies to the projections 
$$
p : X \times Y \to X, \quad q : X \times Y \to Y
$$ 
and yields homomorphisms 
$$
p_* : \Aut^o(X) \times \Aut^o(Y) \to \Aut^o(X), \quad 
q_* : \Aut^o(X) \times \Aut^o(Y) \to \Aut^o(Y).
$$
This implies readily the following analogue of 
\cite[Corollaire, p. 161]{Bl}:

\begin{corollary}\label{cor:prod}
Let $X$ and $Y$ be complete varieties. Then the homomorphism
$$
(p_*,q_*) : \Aut^o(X \times Y) \longrightarrow 
\Aut^o(X) \times \Aut^o(Y)
$$
is an isomorphism, with inverse the natural homomorphism 
$$
\Aut^o(X) \times \Aut^o(Y) \longrightarrow \Aut^o(X \times Y),
\quad (g, h) \longmapsto \big( (x,y) \mapsto (g(x),h(y)  \big).
$$
\end{corollary}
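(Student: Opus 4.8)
The plan is to show that the two homomorphisms in the statement are mutually inverse, verifying both composite identities on $S$-points (rather than merely on $k$-points) so that no reducedness hypothesis on the automorphism group schemes is needed. Before that, I would check that Corollary \ref{cor:dir} really applies to $p$ and $q$: the product $X \times Y$ is proper, and since $Y$ is a complete variety we have $H^0(Y,\cO_Y)=k$, whence $p_*(\cO_{X \times Y}) = \cO_X$ by flat base change; symmetrically $q_*(\cO_{X \times Y}) = \cO_Y$. Thus $p_*$ and $q_*$, and hence $(p_*,q_*)$, are defined. I would also record that the natural map $\iota$ sending $(g,h)$ to the product automorphism $g \times h$ of $X \times Y$ is a homomorphism of group functors, and that it lands in $\Aut^o(X \times Y)$ because its source $\Aut^o(X) \times \Aut^o(Y)$ is connected and it preserves the neutral element.

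The one structural input I would isolate is the defining relation for $p_*$ and $q_*$ coming out of Proposition \ref{prop:blan}. Reading the equivariance square of that proposition on $S$-points, it says: for every scheme $S$ and every $\Phi \in \Aut^o(X \times Y)(S)$,
\[
p_S \circ \Phi \;=\; p_*(\Phi) \circ p_S, \qquad q_S \circ \Phi \;=\; q_*(\Phi) \circ q_S,
\]
where $p_S = p \times \id_S$ and $q_S = q \times \id_S$ are the projections after base change to $S$. These relations hold functorially precisely because $p_*$ and $q_*$ are morphisms of the representing functors.

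Granting this, both composites are immediate. For $(p_*,q_*) \circ \iota = \id$, I would observe that for $(g,h) \in (\Aut^o(X) \times \Aut^o(Y))(S)$ the automorphism $g \times h$ satisfies $p_S \circ (g \times h) = g \circ p_S$ and $q_S \circ (g \times h) = h \circ q_S$, so the uniqueness clause of Proposition \ref{prop:blan} forces $p_*(g \times h) = g$ and $q_*(g \times h) = h$. For $\iota \circ (p_*,q_*) = \id$, I would take $\Phi \in \Aut^o(X \times Y)(S)$ and note that the two relations above identify the two projections of $\Phi$ with those of $\iota(p_*(\Phi),q_*(\Phi)) = p_*(\Phi) \times q_*(\Phi)$; since a morphism into $(X \times Y) \times S = (X \times S) \times_S (Y \times S)$ is determined by its two projections, this gives $\Phi = \iota(p_*(\Phi),q_*(\Phi))$.

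I do not expect a genuinely hard step here: the argument is formal once the Blanchard relations are in hand. The only point demanding care is that everything must be run at the level of $S$-points, since $\Aut^o(X \times Y)$ may fail to be reduced in positive characteristic; checking the identities on $k$-points alone would not suffice to equate the two morphisms of group schemes. After that, the universal property of the fibre product over $S$ closes the argument.
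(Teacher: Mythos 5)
Your proposal is correct and follows the same route as the paper, which simply applies Corollary \ref{cor:dir} to the two projections and observes that the resulting $(p_*,q_*)$ and the product homomorphism are readily seen to be mutually inverse. You merely fill in the routine details the paper leaves implicit (the Künneth-type identity $p_*(\cO_{X\times Y})=\cO_X$ using $\cO(Y)=k$, and the verification on $S$-points via the equivariance squares), all of which are sound.
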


More generally, the isomorphism 
$$
\Aut^o(X \times Y) \cong \Aut^o(X) \times \Aut^o(Y)
$$
holds for those proper schemes $X$ and $Y$ such that
$\cO(X) = \cO(Y) = k$, but may fail for arbitrary proper schemes.
Indeed, let $X$ be a complete variety having non-zero 
global vector fields, and let $Y := \Spec \, k[\varepsilon]$
where $\epsilon^2 = 0$; denote by $y$ the closed point of $Y$. 
Then we have an exact sequence
$$
1 \longrightarrow \Gamma(X,T_X) \longrightarrow \Aut_Y(X \times Y)
\longrightarrow \Aut(X) \longrightarrow 1,
$$
where the map on the right is obtained by restricting to
$X \times \{y\}$. This identifies the vector group $\Gamma(X,T_X)$ 
to a closed subgroup of $\Aut^o(X \times Y)$, which is not
in the image of the natural homomorphism.

Likewise, $\Aut(X \times Y)$ is generally strictly larger than
$\Aut(X) \times \Aut(Y)$ (e.g. take $Y = X$ and consider the 
automorphism $(x,y) \mapsto (y,x)$).

\section{Torsors and asssociated fiber bundles}
\label{sec:ass}

Consider a group scheme $G$, a $G$-scheme $X$, and a $G$-invariant 
morphism 
\begin{equation}\label{eqn:tor}
\pi: X \longrightarrow Y,
\end{equation}
where $Y$ is a scheme. We say that $X$ is a 
{\it $G$-torsor over $Y$}, if $\pi$ is faithfully flat 
and the morphism
\begin{equation}\label{eqn:act}
\alpha \times p_2 : G \times X \longrightarrow X \times_Y X, \quad 
(g,x) \longmapsto (g \cdot x, x)
\end{equation}
is an isomorphism. The latter condition is equivalent to the 
existence of a faithfully flat morphism $f: Y' \to Y$ such that the 
pull-back torsor $\pi' : X \times_Y Y' \to Y'$ is trivial.
(Since our schemes are assumed to be of finite type, $\pi$ is 
quasi-compact and finitely presented; thus, there is no need 
to distinguish between the fppf and the fpqc topology).

For a $G$-torsor (\ref{eqn:tor}), the morphism $\pi$ is surjective, 
and its geometric fiber $X_{\bar{y}}$ is isomorphic to $G_{\bar{y}}$ 
for any (possibly non-closed) point $y \in Y$. In particular, 
$\pi$ is smooth if and only if $G$ is an algebraic group; under 
that assumption, $X$ is smooth (resp. normal) if and only if 
so is $Y$. 

Also, note that $\pi$ is a universal geometric quotient in the 
sense of \cite[Section 0]{MFK}, and hence a universal categorical
quotient (see [loc. cit., Proposition 0.1]). In particular, 
$Y(k) = X(k)/G(k)$ and $\cO_Y = \pi_*(\cO_X)^G$ 
(the subsheaf of $G$-invariants in $\pi_*(\cO_X)$). Thus, we will
also denote $Y$ by $X/G$.

\begin{remark}
If $G$ is an affine algebraic group, then every $G$-torsor 
(\ref{eqn:tor}) is \emph{locally isotrivial}, i.e., for any point
$y \in Y$ there exist an open subscheme $V \subset Y$ containing
$y$ and a finite \'etale surjective morphism $f : V' \to V$
such that the pull-back torsor $X \times_V V'$ is trivial
(this result is due to Grothendieck, see \cite[Lemme XIV 1.4]{Ra} 
for a detailed proof). The local isotriviality of $\pi$ also holds 
if $G$ is an algebraic group and $Y_{\red}$ is normal, as a consequence 
of [loc. cit., Th\'eor\`eme XIV 1.2]. In particular, $\pi$ is
locally trivial for the \'etale topology in both cases.

Yet there exist torsors under algebraic groups that are not 
locally isotrivial, see [loc. cit., XIII 3.1] (reformulated in more 
concrete terms in \cite[Example 6.2]{Br1}) for an example where 
$Y$ is a rational nodal curve, and $G$ is an abelian variety having 
a point of infinite order.
\end{remark}

Given a $G$-torsor (\ref{eqn:tor}) and a $G$-scheme $Z$, we may
view $X \times Z$ as a $G$-scheme for the diagonal action, and
ask if there exist a $G$-torsor $\varpi: X \times Z \to W$ 
where $W$ is a scheme, and a morphism $q : W \to Y$ such that 
the square
$$
\CD
X \times Z @>{p_1}>> X \\
@V{\varpi}VV @V{\pi}VV \\
W @>{q}>> Y \\
\endCD
$$
is cartesian; here $p_1$ denotes the first projection. 
Then $q$ is called the 
{\it associated fiber bundle with fiber $Z$}.
The quotient scheme $W$ will be denoted by $X \times^G Z$.

The answer to this question is positive if $Z$ admits an ample 
$G$-linearized invertible sheaf (as follows from descent theory; 
see \cite[Proposition 7.8]{SGA1} and also 
\cite[Proposition 7.1]{MFK}). In particular, the answer is
positive if $Z$ is affine. Yet the answer is generally negative,
even if $Z$ is a smooth variety; see \cite{Bi}. However, 
associated fiber bundles do exist in the category of algebraic spaces, 
see \cite[Corollary 1.2]{KM}.

Of special interest is the case that the fiber is a group scheme $G'$
where $G$ acts through a homomorphism $f: G \to G'$. Then 
$X' := X \times^G G'$ is a $G'$-torsor over $Y$, obtained from $X$ 
by \emph{extension of the structure group}. If $f$ identifies
$G$ with a closed subgroup scheme of $G'$, then $X'$ comes with a
$G'$-morphism to $G'/G$ arising from the projection 
$X \times G' \to G'$. Conversely, the existence of such a morphism
yields a reduction of structure group, in view of the following 
standard result:

\begin{lemma}\label{lem:ind}
Let $G$ be a group scheme, $H$ a subgroup scheme, and $X$ a $G$-scheme
equipped with a $G$-morphism $f : X \to G/H$. Denote by $Z$ the fiber of
$f$ at the base point of $G/H$, so that $Z$ is an $H$-scheme.

Then $f$ is faithfully flat, and the natural map 
$G \times Z \to X$ factors through a $G$-isomorphism
$G \times^H Z \cong X$. 

If $\pi : X \to Y$ is a $G$-torsor, then the restriction 
$\pi_{\vert Z} : Z \to Y$ is an $H$-torsor.
\end{lemma}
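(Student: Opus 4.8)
The plan is to study $X$ by base change along the quotient map $\pi_H : G \to G/H$, which is an $H$-torsor and in particular faithfully flat. Write $o \in (G/H)(k)$ for the base point, so that the scheme-theoretic fiber $Z = f^{-1}(o) = X \times_{G/H} \{o\}$; since $H$ is the stabilizer of $o$ and $f$ is $G$-equivariant, $H$ preserves $Z$, confirming that $Z$ is an $H$-scheme. The first step is to produce mutually inverse morphisms
$$
\Psi(g,z) = (g \cdot z,\, g), \qquad \Phi(x,g) = (g,\, g^{-1}\cdot x),
$$
between $G \times Z$ and $X \times_{G/H} G$. Equivariance of $f$ shows these land in the indicated schemes (one has $f(g \cdot z) = g \cdot o = \pi_H(g)$, and $f(g^{-1}\cdot x) = g^{-1} \cdot \pi_H(g) = o$ whenever $f(x) = \pi_H(g)$) and that they are inverse to each other, yielding an isomorphism $G \times Z \cong X \times_{G/H} G$ of schemes over $G$.

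From this identification I would read off the first two assertions. The second projection $G \times Z \to G$ is faithfully flat, since over the field $k$ every module is flat; hence the base change of $f$ along $\pi_H$ is faithfully flat, and as $\pi_H$ is an fppf cover of $G/H$, faithfully flat descent on the target gives that $f$ itself is faithfully flat. For the isomorphism $G \times^H Z \cong X$, note that the base change $X \times_{G/H} G \to X$ of the $H$-torsor $\pi_H$ is again an $H$-torsor; transporting its $H$-action through $\Phi$ yields the action $(g,z)\cdot h = (gh,\, h^{-1}\cdot z)$ on $G \times Z$, which is exactly the action defining $G \times^H Z$. Thus $X \cong (X \times_{G/H} G)/H \cong (G \times Z)/H = G \times^H Z$; the identification is $G$-equivariant for left translation on the $G$-factor, and under it the quotient map $G \times Z \to X$ is the natural map $(g,z) \mapsto g \cdot z$.

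Now suppose $\pi : X \to Y$ is a $G$-torsor, so that $\alpha \times p_2 : G \times X \to X \times_Y X$ is an isomorphism. I would obtain the $H$-torsor structure on $\pi_{\vert Z} : Z \to Y$ by restriction: the preimage of the closed subscheme $Z \times_Y Z \subset X \times_Y X$ under $\alpha \times p_2$ is cut out by the conditions $x \in Z$ and $g \cdot x \in Z$; using $f(g \cdot x) = g \cdot o = \pi_H(g)$ for $x \in Z$, the second condition becomes $g \in H = \pi_H^{-1}(o)$, so the preimage is exactly $H \times Z$. Hence $\alpha \times p_2$ restricts to an isomorphism $H \times Z \cong Z \times_Y Z$, which is the torsor axiom for the $H$-action on $Z$. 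For faithful flatness, I would factor the composite $G \times Z \to X \to Y$, which sends $(g,z) \mapsto \pi(z)$ by $G$-invariance of $\pi$, as $G \times Z \to Z \to Y$ through the projection; the composite is faithfully flat (the map $G \times Z \to X$ is the $H$-torsor of the previous step, and $\pi$ is faithfully flat) and the projection is faithfully flat, so by faithfully flat descent of flatness $\pi_{\vert Z}$ is flat, and being surjective it is faithfully flat.

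The step requiring the most care is that all these identifications are valid scheme-theoretically and not merely on $k$-points: one must check that $\Phi$ and $\Psi$ are genuine inverse morphisms of schemes, that the transported $H$-action agrees on the nose with the standard one, and above all that the preimage of $Z \times_Y Z$ equals $H \times Z$ as a closed subscheme. Working throughout with the scheme-theoretic fibers $Z = X \times_{G/H} \{o\}$ and $H = G \times_{G/H} \{o\}$ reduces each of these to a routine compatibility of fiber-product squares, which is where the real content lies; the descent facts invoked (that flatness and surjectivity are fppf-local on the target, and that flatness descends along a faithfully flat morphism of the source) are standard.
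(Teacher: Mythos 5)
Your proposal is correct and follows essentially the same route as the paper: base change along the $H$-torsor $G \to G/H$ to get the isomorphism $G \times Z \cong X \times_{G/H} G$, descend faithful flatness of $f$, identify the transported $H$-action to obtain $X \cong G \times^H Z$, and verify the torsor condition for $\pi_{\vert Z}$ by computing the fiber of $f \times f$ (equivalently, the preimage of $Z \times_Y Z$ under $\alpha \times p_2$) at the base point. The deduction of flatness of $\pi_{\vert Z}$ from the faithfully flat composite $G \times Z \to Y$ factoring through the projection to $Z$ is likewise the paper's argument.
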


\begin{proof}
Form and label the cartesian square
$$
\CD
X' @>{f'}>> G \\
@V{q'}VV @V{q}VV \\
X@>{f}>> G/H \\ 
\endCD
$$
where $q$ denotes the quotient map. Then $X'$ is a $G$-scheme 
and $f'$ is a $G$-morphism with fiber $Z$ at $e_G$. It follows 
readily that the morphism 
$$
G \times Z \longrightarrow X', \quad (g,z) \longmapsto g \cdot z
$$
is an isomorphism, with inverse
$$
X' \longrightarrow G \times Z, \quad
x' \longmapsto \big( f'(x'), f'(x')^{-1} \cdot x' \big).
$$
This identifies $f'$ with the projection $G \times X \to G$;
in particular, $f'$ is faithfully flat. Since $q$ is an $H$-torsor,
$f$ is faithfully flat as well; moreover, $q'$ is
an $H$-torsor. This yields the first assertion. 

Next, the $G$-torsor $\pi: X \to Y$ yields a $G \times H$-torsor
$$
F: G \times Z \longrightarrow Y, \quad 
(g,z) \longmapsto \pi(g \cdot z).
$$
Moreover, $F$ is the composite of the projection 
$G \times Z \to Z$ followed by $\pi_{\vert Z}$. Thus, 
$\pi_{\vert Z}$ is faithfully flat. It remains to show that the 
natural morphism $H \times Z \to Z \times_Y Z$ is an isomorphism. 
But this follows by considering the isomorphism (\ref{eqn:act})
and taking the fiber of the morphism 
$f \times f : X \times_Y X \to G/H \times G/H$ 
at the base point of $G/H \times G/H$.
\end{proof}

Returning to a $G$-torsor (\ref{eqn:tor}) and a $G$-scheme $Z$, 
we now show that the associated fiber bundle $X \times^G Z$ is a scheme
in the case that $G$ is connected and acts transitively on $Z$. 
Then $Z \cong G/H$ for some subgroup scheme $H \subset G$, and hence 
$X \times^G Z \cong X/H$ as algebraic spaces.

\begin{theorem}\label{thm:fact}
Let $G$ be a connected group scheme, $\pi : X \to Y$ a $G$-torsor, 
and $H \subset G$ a subgroup scheme. Then:

\smallskip

\noindent
{\rm (i)} $\pi$ factors uniquely as the composite
\begin{equation}\label{eqn:pq}
\CD
X @>{p}>> Z @>{q}>> Y,
\endCD
\end{equation}
where $Z$ is a scheme, and $p$ is an $H$-torsor.

\smallskip

\noindent
{\rm (ii)} If $H$ is a normal subgroup scheme of $G$, then 
$q$ is a $G/H$-torsor.
\end{theorem}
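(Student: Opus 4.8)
The plan is to dispose of uniqueness and of part (ii) by formal arguments, and to concentrate the effort on producing the scheme $Z$ in part (i). For uniqueness, I would observe that any factorization as in (\ref{eqn:pq}) with $p$ an $H$-torsor presents $Z$ as the quotient sheaf $X/H$ for the fppf topology; since that sheaf depends only on the $H$-action on $X$, any two such factorizations are canonically identified over $Y$. So the content is to exhibit one such $Z$, and the candidate is forced upon us: the discussion preceding the theorem already identifies $X \times^G (G/H)$ with $X/H$ as algebraic spaces, and since $H$ is flat and acts freely on the scheme $X$ (the action being the restriction of the free $G$-action), the projection $p : X \to X/H$ is automatically an $H$-torsor. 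Thus everything comes down to proving that the algebraic space $Z := X/H$ is a scheme, with $q : Z \to Y$ the induced map.

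To get at the scheme structure I would work fppf-locally on $Y$. The quotient $G/H$ exists as a scheme and is quasi-projective (see \cite{Ra}), and it carries a transitive $G$-action. Choosing a faithfully flat $f : Y' \to Y$ that trivializes the torsor, so that $X \times_Y Y' \cong G \times Y'$, one gets $Z \times_Y Y' \cong (G/H) \times Y'$, which is a scheme, quasi-projective over $Y'$; in particular $q$ becomes a quasi-projective morphism after the base change $f$. Hence $q : Z \to Y$ is a morphism of algebraic spaces whose fibres are the quasi-projective homogeneous space $G/H$ and which is quasi-projective fppf-locally on the target.

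The main obstacle is to descend this scheme structure from $Y'$ to $Y$ itself. The standard device for turning the associated bundle into a scheme --- descending a relatively ample sheaf by the criterion already quoted (\cite[Proposition 7.8]{SGA1}, \cite[Proposition 7.1]{MFK}) --- demands a $G$-linearized ample invertible sheaf on the fibre $G/H$, and such a sheaf need not exist: already for $G$ an abelian variety and $H$ trivial, no ample sheaf on $G/H = G$ is translation-invariant, so none admits a $G$-linearization. The difficulty is therefore that the evident $Y'$-ample sheaf on $Z \times_Y Y'$ fails to carry a descent datum along $f$, the obstruction being measured, fibrewise, by a class in $\Pic^0(G/H)$. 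Overcoming this --- showing that $Z$ is nonetheless a scheme, quasi-projective over $Y$ --- is the crux of the argument, and the step I expect to take from the referee's report; with it in hand, part (i) is complete, and the resulting quasi-projectivity of $q$ is what feeds into Corollary \ref{cor:fact}.

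Granting (i), part (ii) is formal. When $H$ is normal in $G$, the subgroup scheme $gHg^{-1}$ equals $H$ for every $g$, so the $G$-action on $X$ descends to a $G/H$-action on $Z = X/H$ under which $q$ is invariant and $H$ acts trivially. To see that $q$ is a $G/H$-torsor I would verify the isomorphism (\ref{eqn:act}) after the base change $f$: there $Z \times_Y Y' \cong (G/H) \times Y'$ is the trivial $G/H$-torsor, the structural map $(\bar g, \bar x) \mapsto (\bar g \bar x, \bar x)$ of $(G/H) \times (G/H)$ being an isomorphism because $G/H$ is a group scheme. Since the property of being a torsor is local for the fppf topology on the base, it follows that $q : Z \to Y$ is a $G/H$-torsor.
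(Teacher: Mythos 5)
Your reduction of the problem is sound as far as it goes: uniqueness via the quotient sheaf, the identification of the candidate $Z = X/H$ as an algebraic space, the observation that $p$ is automatically an $H$-torsor once $Z$ is known to be a scheme, part (ii) by fppf descent of the torsor condition, and the correct diagnosis that the usual descent of a relatively ample sheaf fails because $G/H$ need not carry a $G$-linearized ample invertible sheaf. But at exactly that point the proposal stops: the representability of $X/H$ by a scheme \emph{is} the content of the theorem, and declaring it ``the crux \dots\ the step I expect to take from the referee's report'' leaves the proof with no argument for its only nontrivial assertion. (The acknowledgements credit the referee only with the \emph{final} step, not with the construction.) The paper's route is not a direct descent of the scheme structure from $Y'$ to $Y$. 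It first reduces to the case where $X,Y$ are normal quasi-projective varieties, via the normalization of $Y_{\red}$, Raynaud's quasi-projectivity results, and the fact that representability of the quotient may be tested after the finite surjective base change $\tilde Y \to Y$. It then invokes \cite[Lemma 3.2]{Br1} to produce a $G$-morphism $X \to G/G_1$ with $G_{\aff} \subset G_1$ and $G_1/G_{\aff}$ finite, hence $X \cong G \times^{G_1} X_1$ with $G_1$ affine and $X_1$ quasi-projective, and assembles $Z = G/H \times^{G_1} X_1$ out of associated bundles whose structure groups are affine, where \cite[Proposition 7.1]{MFK} does apply; the non-smooth case is reduced to this one by dividing by Frobenius kernels. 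Some such reduction of the structure group to an affine-by-finite subgroup is the missing idea, and nothing in your outline supplies a substitute for it.

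A secondary but genuine error: you assert that the goal is to show $Z$ is ``a scheme, quasi-projective over $Y$,'' and that this quasi-projectivity is what feeds into Corollary \ref{cor:fact}. Quasi-projectivity of $q$ is false in general: taking $H$ trivial gives $q = \pi$, and Remarks \ref{rem:fact} (i) exhibit a torsor under an elliptic curve over a normal affine surface whose total space is not quasi-projective. Indeed, Corollary \ref{cor:fact} treats quasi-projectivity of $\pi$ (equivalently, projectivity of $q$) as an additional condition, equivalent to a reduction of structure group to an affine subgroup scheme --- precisely the condition that can fail. So the target of your missing step is also overstated; only representability by a scheme, not quasi-projectivity over $Y$, can be expected.
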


\begin{proof}
(i) The uniqueness of the factorization (\ref{eqn:pq}) follows
from the fact that $p$ is a universal geometric quotient. 

Also, the factorization (\ref{eqn:pq}) exists after base 
change under $\pi: X \to Y$: it is just the composite
$$
\CD
G \times X @>{r \times \id_X}>> G/H \times X @>{p_2}>> X
\endCD
$$
where $r : G \to G/H$ is the quotient map, and $p_2$ the projection.

Thus, it suffices to show that the algebraic space $X/H$ 
is representable by a scheme.

We first prove this assertion under the assumption that 
\emph{$G$ is a (connected) algebraic group}. We begin by
reducing to the case that
\emph{$X$ and $Y$ are normal, quasi-projective varieties}. 
For this, we adapt the argument of \cite[pp. 206--207]{Ra}. 
We may assume that $X = G \cdot U$, where $U \subset X$ is an 
open affine subscheme (since $X$ is covered by open $G$-stable
subschemes of that form). Then let $\nu : \tilde Y \to Y$
denote the normalization map of $Y_{\red}$. Consider the 
cartesian square
$$
\CD
\tilde X @>>> X \\
@V{\tilde\pi}VV @V{\pi}VV \\
\tilde Y @>{\nu}>> Y \\
\endCD
$$
and let $\tilde U := U \times_Y \tilde Y$. Then $\tilde \pi$ 
is a $G$-torsor, and hence $\tilde X$ is normal. Moreover,
$\tilde X = G \cdot \tilde U$ contains $\tilde U$
as an open affine subset. Hence $\tilde \pi$ is 
quasi-projective by \cite[Th\'eor\`eme VI 2.3]{Ra}. 
Therefore, to show that $X/H$ is a scheme, it suffices to check 
that $\tilde X/H$ is a scheme in view of [loc. cit., Lemme XI.3.2].
Thus, we may assume that $X$ is normal and $\pi$ is
quasi-projective. Then we may further assume that $Y$ is
quasi-projective, and hence so is $X$. Now $X$ is the disjoint
union of its irreducible components, and each of them is 
$G$-stable; thus, we may assume that $X$ is irreducible. 
This yields the desired reduction.

Thus, we may assume that there exists an ample invertible sheaf 
$L$ on $X$; since $X$ is normal, we may assume that $L$ is 
$G_{\aff}$-linearized. In view of \cite[Lemma 3.2]{Br1}, it follows 
that there exists a $G$-morphism 
$X \to G/G_1$, where $G_1 \subset G$ is a subgroup scheme 
containing $G_{\aff}$ and such that $G_1/G_{\aff}$ is finite. 
By Lemma \ref{lem:ind}, this yields a $G$-isomorphism
$$
X \cong G \times^{G_1} X_1
$$
where $X_1 \subset X$ is a closed subscheme, stable under $G_1$.
Moreover, the restriction $\pi_1 : X_1 \to Y$ is a $G_1$-torsor.
Since $G_1$ is affine, so is the morphism $\pi_1$ and hence 
$X_1$ is quasi-projective.

We now show that $\pi_1$ factors as a $G_{\aff}$-torsor 
$p_1: X_1 \to X_1/G_{\aff}$, where $X_1/G_{\aff}$ is a quasi-projective
scheme, followed by a $G_1/G_{\aff}$-torsor  
$q_1: X_1/G_{\aff} \to Y$. Indeed, the associated fiber bundle
$X_1 \times^{G_1} G_1/G_{\aff}$ is a quasi-projective scheme, 
since $G_1/G_{\aff}$ is affine; we then take for $p_1$ the composite 
of the morphism $\id_{X_1} \times e_{G_1} : X_1 \to X_1 \times G_1$ 
with the natural morphism 
$X_1 \times G_1 \to X_1 \times^{G_1} G_1/G_{\aff}$. Then $p_1$ is
$G_{\aff}$-invariant and fits into a commutative diagram
$$
\CD
X_1 \times G_1 @>>> X_1 \times G_1/G_{\aff} @>>> X_1 \\
@VVV @VVV @VVV \\
X_1 @>{p_1}>> X_1 \times^{G_1} G_1/G_{\aff} @>{q_1}>> X_1/G_1 = Y \\
\endCD
$$
where the top horizontal arrows are the natural projections, and the vertical 
arrows are $G_1$-torsors; thus, $p_1$ is a $G_{\aff}$-torsor.

Next, note that the smooth, quasi-projective $G_{\aff}$-variety $G$ admits 
a $G_{\aff}$-linearized ample invertible sheaf. By the preceding step and 
\cite[Proposition 7.1]{MFK}, it follows that $G \times^{G_{\aff}} X_1$ is 
a quasi-projective scheme; it is the total space of a $G_1/G_{\aff}$-torsor 
over $X = G \times^{G_1} X_1$. Likewise, $G/H \times^{G_{\aff}} X_1$ 
is a quasi-projective scheme, the total space of a
$G_1/G_{\aff}$-torsor over
$$
(G/H \times^{G_{\aff}} X_1)/(G_1/G_{\aff}) =: Z
$$ 
It follows that $Z = G/H \times^{G_1} X_1$ fits into a cartesian square
$$
\CD
G \times X_1 @>{r \times \id_{X_1}}>> G/H \times X_1 \\
@VVV @VVV \\
X @>{p}>> Z
\endCD
$$
where the vertical arrows are $G_1$-torsors; therefore, 
$p$ is an $H$-torsor.

Finally, in the \emph{general case}, we may assume that $k$ has
characteristic $p > 0$. For any positive integer $n$, we then have
the $n$-th Frobenius morphism
$$
F^n_G : G \longrightarrow G^{(n)}.
$$
Its kernel $G_n$ is a finite local subgroup scheme of $G$. Likewise,
we have the $n$-th Frobenius morphism
$$
F^n_X : X \longrightarrow X^{(n)}
$$ 
and $G^{(n)}$ acts on $X^{(n)}$ compatibly with the $G$-action on $X$.
In particular, $F^n_X$ is invariant under $G_n$. Since the morphism 
$F_X^n$ is finite, the sheaf of $\cO_{X^{(n)}}$-algebras 
$\big((F_X^n)_*\cO_X\big)^{G_n}$ is of finite type. Thus, the scheme
$$
X/G_n := \Spec_{X^{(n)}} \big((F_X^n)_*\cO_X\big)^{G_n}
$$
is of finite type, and $F_X^n$ is the composite of the natural
morphisms $X \to X/G_n \to X^{(n)}$. Clearly, the formation of $X/G_n$
commutes with faithfully flat base change; thus, the morphism
$$
\pi_n : X \longrightarrow X/G_n
$$
is a $G_n$-torsor, since this holds for the trivial $G$-torsor
$G \times Y \to Y$. As a consequence, $\pi$ factors through 
$\pi_n$, the $G$-action on $X$ descends to an action of 
$G/G_n \cong G^{(n)}$ on $X/G_n$, and the map $X/G_n \to Y$
is a $G^{(n)}$-torsor. Note that $G^{(n)}$ is reduced, and hence
a connected algebraic group, for $n \gg 0$.

Now consider the restriction
$$
F^n_H : H \longrightarrow H^{(n)}
$$
with kernel $H_n = H \cap G_n$. Then $H$ acts on $X/G_n$ via its 
quotient $H/H_n \cong H^{(n)} \subset G^{n)}$. 
By the preceding step, there exists an $H^{(n)}$-torsor 
$X/G_n \to (X/G_n)/H^{(n)} = X/G_n H$, and hence a $G_n H$-torsor 
$$
p_n : X \longrightarrow X/G_n H
$$
where $X/G_n H$ is a scheme (of finite type). We now set
$$
Z := \Spec_{X/G_n H} \big( (p_n)_* \cO_X \big)^H
$$
so that $p_n$ factors through a morphism $p : X \to Z$. Then $p$ 
is an $H$-torsor, since the formations of $X/G_n H$ and $Z$ commute 
with faithfully flat base change, and $p$ is just the natural map
$G \times Y \to G/H \times Y$ when $\pi$ is the trivial torsor
over $Y$. Likewise, the morphism $Z \to X/G_n H$ is finite, and hence 
the scheme $Z$ is of finite type.

(ii) The composite map 
$$
\CD
G \times X @>{\alpha}>> X @>{p}>> X/H
\endCD
$$
is invariant under the action of $H \times H$ on $G \times X$
via $(h_1,h_2)\cdot (g,x) = (g h_1^{-1},h_2 \cdot x)$. This yields
a morphism $\beta : G/H \times X/H \to X/H$ which is readily seen
to be an action.
\end{proof}

\begin{corollary}\label{cor:join}
Let again $G$ be a connected group scheme.

\smallskip

\noindent
(i) Given two $G$-torsors $\pi_1: X_1 \to Y_1$
and $\pi_2 : X_2 \to Y_2$, the associated torsor
$X_1 \times X_2 \to X_1 \times^G X_2$ exists.

\smallskip

\noindent
(ii) Given a homomorphism of group schemes 
$f : G \to G'$ and a $G$-torsor $\pi : X \to Y$,
the $G'$-torsor $\pi': G' \times^G X \to Y$ (obtained by
extension of structure groups) exists. 
\end{corollary}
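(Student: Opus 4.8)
The plan is to deduce both assertions directly from Theorem \ref{thm:fact}, applied not to $X_1,X_2$ or $X$ themselves but to suitable product torsors under a larger group scheme; the key point in each case is to realize the quotient in question as the quotient of a torsor under a \emph{connected} group scheme by a subgroup scheme, so that the theorem applies.

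For (i), I would first observe that $\pi_1 \times \pi_2 : X_1 \times X_2 \to Y_1 \times Y_2$ is a torsor under $G \times G$: it is faithfully flat, and the associated action map $(\ref{eqn:act})$ is just the product of the two isomorphisms coming from $\pi_1$ and $\pi_2$. Since $G$ is connected, so is $G \times G$. I would then apply Theorem \ref{thm:fact}(i) to this $(G \times G)$-torsor and to the diagonal subgroup scheme $\Delta_G \cong G$ of $G \times G$. Because the restriction of the $(G\times G)$-action to $\Delta_G$ is precisely the diagonal $G$-action $g\cdot(x_1,x_2)=(g\cdot x_1, g\cdot x_2)$, the resulting factorization $X_1 \times X_2 \to (X_1\times X_2)/\Delta_G \to Y_1 \times Y_2$ exhibits $(X_1\times X_2)/\Delta_G = X_1 \times^G X_2$ as a scheme and the first map as a $G$-torsor, which is exactly the assertion.

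For (ii), I would run a parallel argument with the $(G'\times G)$-torsor $G' \times X \to Y$, $(g',x)\mapsto \pi(x)$, where $G'\times G$ acts by $(a,g)\cdot(g',x) = (a\,g'\,f(g)^{-1},\, g\cdot x)$; one checks, after trivializing $\pi$ by a faithfully flat base change, that this action is simply transitive on fibers, so that we indeed have a torsor. The subgroup scheme $H := \{e_{G'}\}\times G$ is normal in $G'\times G$, with quotient $G'$, and its action on $G'\times X$ is $g\cdot(g',x)=(g' f(g)^{-1}, g\cdot x)$, whence $(G'\times X)/H = G' \times^G X$. Assuming first that $G'$ is connected, $G'\times G$ is connected, and Theorem \ref{thm:fact} (both parts) yields that $G'\times^G X$ is a scheme and that $q:G'\times^G X \to Y$ is a torsor under $(G'\times G)/H \cong G'$ for the residual left-multiplication action; this gives (ii) in the connected case.

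The main obstacle is that $G'$ need not be connected, so $G'\times G$ need not be connected and Theorem \ref{thm:fact} does not apply verbatim. To handle this I would use that, $G$ being connected, the image $f(G)$ is connected and hence contained in $(G')^o$. Consequently the $H$-action $g\cdot(g',x)=(g' f(g)^{-1}, g\cdot x)$ preserves each connected component $C\times X$ of $G'\times X$, where $C$ runs over the finitely many components of $G'$, each a coset of $(G')^o$; translating such a $C$ to $(G')^o$ intertwines the $H$-actions (left translation by a coset representative commutes with the right multiplication by $f(g)^{-1}\in (G')^o$), and so identifies $(C\times X)/H$ with $(G')^o \times^G X$, a scheme by the connected case. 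Hence $G'\times^G X$ is a finite disjoint union of schemes, so a scheme; and being the quotient of the $(G'\times G)$-torsor $G'\times X\to Y$ by the normal subgroup scheme $H$, it is a $G'$-torsor over $Y$. I expect the only mild care to be needed in verifying the torsor axiom for $G'\times X\to Y$ and in matching up the various quotients, both of which are routine once the group actions are written out explicitly.
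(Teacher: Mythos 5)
Your proof of (i) is exactly the paper's: apply Theorem \ref{thm:fact} to the $G\times G$-torsor $X_1\times X_2\to Y_1\times Y_2$ and the diagonal copy of $G$. Your proof of (ii) is correct but takes a genuinely different route around the one real difficulty, namely that $G'\times G$ need not be connected. The paper replaces $G'\times G$ by the \emph{connected} group $\bar{G}\times G$, where $\bar{G}$ is the scheme-theoretic image of $f$: it views $p\times\pi: G'\times X\to G'/\bar{G}\times Y$ as a $\bar{G}\times G$-torsor and applies Theorem \ref{thm:fact} to the graph subgroup $(f\times\id)(G)\cong G$, whose orbits are exactly the diagonal $G$-orbits; the $G'$-torsor structure over $Y$ is then obtained by descending the trivial $G'$-torsor $G'\times X\to X$ along $\pi$. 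You instead keep the possibly disconnected group $G'\times G$ with its subgroup $H=\{e_{G'}\}\times G$ and reduce to the connected case by decomposing $G'$ into its finitely many components $C=c(G')^o$: since $f(G)\subset (G')^o$, right multiplication by $f(g)^{-1}$ preserves each $C$, and left translation by $c$ gives an $H$-equivariant identification of $C\times X$ with $(G')^o\times X$, so each piece of the quotient is a scheme by the connected case. Both arguments work. The paper's is shorter and uniform --- a single application of the theorem, no case analysis --- at the price of introducing the auxiliary quotient $G'/\bar{G}$ and the slightly less obvious graph subgroup; yours is more pedestrian, but uses rational points on the components of $G'$ (harmless since $k$ is algebraically closed) and leaves as ``routine'' the final descent of the $G'$-action and of the torsor axiom to $(G'\times X)/H$, which indeed goes through exactly as in the proof of Theorem \ref{thm:fact}(ii) together with a faithfully flat base change trivializing $\pi$.
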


\begin{proof}
(i) Apply Theorem \ref{thm:fact} to the 
$G \times G$-torsor $X_1 \times X_2 \to Y_1 \times Y_2$
and to the diagonal embedding of $G$ into $G \times G$.

(ii) Denote by $\bar{G}$ the (scheme-theoretic) image of $f$
and by $p : G' \to G'/\bar{G}$ the quotient morphism. 
Then $p \times \pi: G' \times X \to  G'/\bar{G} \times Y$
is a $\bar{G} \times G$-torsor. Moreover, $\bar{G} \times G$
is a connected group scheme, and contains $G$ viewed as 
the image of the homomorphism $f \times \id$. Applying 
Theorem \ref{thm:fact} again yields a $G$-torsor
$G' \times X \to G' \times^G X$. Moreover, the trivial 
$G'$-torsor $G' \times X \to X$ descends to a 
$G'$-torsor $G' \times^G X \to X/G = Y$.
\end{proof}

\begin{corollary}\label{cor:fact}
Let $G$ be a connected algebraic group. Then every $G$-torsor
(\ref{eqn:tor}) factors uniquely as the composite
\begin{equation}\label{eqn:fact}
\CD
X @>{p}>> Z @>{q}>> Y,
\endCD
\end{equation}
where $Z$ is a scheme, $p$ is a $G_{\aff}$-torsor, and
$q$ is an $A(G)$-torsor. Here $p$ is affine and $q$ is 
proper.

\medskip

Moreover, the following conditions are equivalent:

\begin{enumerate}

\item $\pi$ is quasi-projective.

\item $q$ is projective.

\item $q$ admits a reduction of structure 
group to a finite subgroup scheme $F \subset A(G)$.

\item $\pi$ admits a reduction of structure group to an affine 
subgroup scheme $H \subset G$.
\end{enumerate}

These conditions hold if $X$ is smooth. In characteristic $0$, 
they imply that $q$ is isotrivial and $\pi$ is locally isotrivial.
\end{corollary}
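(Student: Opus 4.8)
The plan is to build the factorization (\ref{eqn:fact}) from Chevalley's structure theorem together with Theorem \ref{thm:fact}, and then to prove the four conditions equivalent by a cycle $(1)\Rightarrow(4)\Rightarrow(3)\Rightarrow(2)\Rightarrow(1)$ in which only the first arrow is substantial. For the factorization I would apply Theorem \ref{thm:fact} to the normal subgroup scheme $H=G_{\aff}$ of $G$: part (i) yields the $G_{\aff}$-torsor $p:X\to Z$, while part (ii) (normality of $G_{\aff}$) shows that $q:Z\to Y$ is an $A(G)$-torsor, where $A(G)=G/G_{\aff}$; the uniqueness is exactly that of Theorem \ref{thm:fact}(i). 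Since a torsor morphism becomes, after a faithfully flat base change, a projection (structure group)$\times$(base)$\to$(base), the morphism $p$ is affine (as $G_{\aff}$ is affine) and $q$ is proper (as the abelian variety $A(G)$ is proper), these properties descending back along the faithfully flat $\pi$ and $q$.

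Three of the four arrows are routine. For $(4)\Rightarrow(3)$ I would enlarge a given affine $H$ to $G_1:=G_{\aff}H$; then $G_1/G_{\aff}\cong H/(H\cap G_{\aff})$ is affine and a subgroup scheme of the abelian variety $A(G)$, hence a finite subgroup scheme $F$, so $G_1$ is affine. Writing $X\cong G\times^{G_1}X_1$ by Lemma \ref{lem:ind} and applying Theorem \ref{thm:fact}(ii) to $G_{\aff}$ inside $G_1$ identifies $Z$ with $A(G)\times^F(X_1/G_{\aff})$, which is a reduction of $q$ to $F$. For $(3)\Rightarrow(2)$ I would pull an ample sheaf back from the abelian variety $A(G)/F$ along the finite quotient $A(G)\to A(G)/F$, obtaining an $F$-linearized ample sheaf on $A(G)$; it descends to a sheaf on $Z=A(G)\times^F Z'$ that is relatively ample over $Y$, so the proper morphism $q$ is projective. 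For $(2)\Rightarrow(1)$, if $M$ is $q$-ample then $p^*M^{\otimes n}$ is $\pi$-ample for $n\gg 0$ since $p$ is affine, so $\pi$ is quasi-projective.

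The crux is $(1)\Rightarrow(4)$. When $X$ is normal I would argue directly: by Mumford's linearization theorem \cite{MFK} a suitable power of a $\pi$-ample sheaf carries a $G_{\aff}$-linearization (here normality is essential), and then \cite[Lemma 3.2]{Br1} provides a $G$-morphism $X\to G/G_1$ with $G_{\aff}\subseteq G_1$ and $G_1/G_{\aff}$ finite; by Lemma \ref{lem:ind} this is a reduction of $\pi$ to the affine subgroup scheme $G_1$. For arbitrary $X$ I would reduce to the normal case by base change along the normalization $\nu:\tilde Y\to Y_{\red}$: because $\pi$ is a torsor under the smooth group $G$, normality of $\tilde Y$ forces normality of $\tilde X:=X\times_Y\tilde Y$, and $\tilde\pi$ stays quasi-projective, so the normal case applies over $\tilde Y$. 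The main obstacle is to transport the conclusion back along the finite surjective $\nu$. A reduction of structure group is extra data rather than a mere property, so descending it directly is delicate; alternatively one may descend the equivalent projectivity of $q$ (better behaved under finite surjective base change, in the spirit of the descent via \cite[Lemme XI.3.2]{Ra} used in the proof of Theorem \ref{thm:fact}), at the cost of then needing the reduction of a projective $A(G)$-torsor to the finite kernel of the polarization attached to a relatively ample sheaf, via the associated Mumford theta group, which supplies the missing arrow $(2)\Rightarrow(3)$ over $Y$. Either route is where the real work lies.

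Finally, if $X$ is smooth it is normal, so the normal case of the crux applies and condition $(4)$ holds; equivalently $\pi$ is quasi-projective by Raynaud \cite[Th\'eor\`eme VI 2.3]{Ra}. In characteristic $0$ the finite group scheme $F$ of condition $(3)$ is \'etale, so the $F$-torsor $Z'\to Y$ trivializing $q$ is finite \'etale, and hence $q$ is isotrivial; combined with the local isotriviality of the $G_{\aff}$-torsor $p$ (Grothendieck, recalled in the remark of Section \ref{sec:ass}), this shows that $\pi=q\circ p$ is locally isotrivial.
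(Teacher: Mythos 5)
Your factorization, the arrows $(4)\Rightarrow(3)\Rightarrow(2)\Rightarrow(1)$, and the isotriviality statements are fine, and your explicit $(3)\Rightarrow(2)$ via descent of the $F$-linearized ample sheaf from $A(G)\times Z'$ is a welcome addition (the paper's own list of implications goes $1\Leftrightarrow 2\Rightarrow 3\Leftrightarrow 4$ and leaves this return arrow implicit). But the substantial implication is exactly the one you leave open. You prove $(1)\Rightarrow(4)$ only for normal $X$ (via Mumford linearization of a $\pi$-ample sheaf and \cite[Lemma 3.2]{Br1}), and for general $X$ you pass to the normalization $\nu:\tilde Y\to Y_{\red}$ and then concede that transporting the conclusion back along $\nu$ is ``where the real work lies''; that concession is a genuine gap, not a deferral of routine detail, since neither a reduction of structure group nor a relatively ample invertible sheaf descends along a finite surjective base change without further argument. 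The paper avoids the normalization detour altogether by running the hard direction through $q$ rather than through $X$: quasi-projectivity of $\pi$ forces projectivity of the quotient torsor $q$ by \cite[Lemme XIV 1.5 (ii)]{Ra}, with no normality hypothesis, and then \cite[Lemma 3.2]{Br1} is applied to the projective $A(G)$-torsor $q:Z\to Y$ --- where the linearization issue disappears because $A(G)_{\aff}$ is trivial, so no normality of $Z$ is needed --- to produce an $A(G)$-morphism $Z\to A(G)/F$ giving condition (3). If you keep your normal-case argument, you still must supply this bridge (or an equivalent descent statement) for the general case.

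A second problem is the smooth case. You derive it from the normal case of your crux, which presupposes condition (1); to obtain (1) you cite \cite[Th\'eor\`eme VI 2.3]{Ra}, but that theorem applies to $X=G\cdot U$ with $U$ an open affine subscheme and in general only yields quasi-projectivity of $\pi$ locally over $Y$, not a global $\pi$-ample sheaf. Moreover Remark \ref{rem:fact}(i) shows that normality of $X$ alone does not imply condition (1), so some input specific to regularity is indispensable here. The paper instead establishes condition (3) directly for smooth $X$ from \cite[Theorem 14]{Ro} or \cite[Proposition XIII 2.6]{Ra}, results about torsors under abelian varieties over regular bases; you need such a reference, or an actual argument, at this point.
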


\begin{proof}
The existence and uniqueness of the factorization are direct 
consequences of Theorem \ref{thm:fact}. The assertions on $p$
and $q$ follow by descent theory (see 
\cite[Expos\'e VIII, Corollaires 4.8, 5.6]{SGA1}).

1$\Rightarrow$2 is a consequence of 
\cite[Lemme XIV 1.5 (ii)]{Ra}.

2$\Rightarrow$1 holds since $p$ is affine.

2$\Rightarrow$3 follows from \cite[Lemma 3.2]{Br1}.

3$\Rightarrow$4. Let $H \subset G$ be the
preimage of $F$. Then $G/H \cong A(G)/F$. By assumption,
$X/G_{\aff}$ admits an $A(G)$-morphism to $A(G)/F$; this yields 
a $G$-morphism $X \to G/H$.

4$\Rightarrow$3. Since $G_{\aff} H$ is affine (as a quotient
of the affine group scheme $G_{\aff} \times H$), we may replace
$H$ with $G_{\aff} H$. Thus, we may assume that $H$ is the 
preimage of a finite subgroup scheme $F \subset A(G)$.
Then $q$ admits a reduction of structure group to $A(G)/F$.

If $X$ is smooth, then so are $Y$ and $Z$; in that case, 
the condition 3 follows from \cite[Theorem 14]{Ro} or alternatively
from \cite[Proposition XIII 2.6]{Ra}.

Also, the condition 3 means that $Y \cong A(G) \times^F Z'$ 
as $A(G)$-torsors over $Z \cong Z'/F$, where $Z'$ 
is a closed $F$-stable subscheme of $Y$. This yields
a cartesian square
$$
\CD
A(G) \times Z' @>{p_2}>> Z' \\
@VVV @VVV \\
Y @>{q}>> Z \\
\endCD
$$
where the vertical arrows are $F$-torsors, and hence
\'etale in characteristic $0$. This shows the isotriviality
of $q$. Since $p$ is locally isotrivial, so is $\pi$.
\end{proof}

\begin{remarks}\label{rem:fact}
(i) The equivalent conditions in the preceding result 
do not generally hold in the setting of normal varieties. 
Specifically, given an elliptic curve $G$, there exists
a $G$-torsor $\pi : X \to Y$ where $Y$ is a normal affine
surface and $X$ is not quasi-projective; then of course
$\pi$ is not projective (see \cite[Example 6.4]{Br1},
adapted from \cite[XIII 3.2]{Ra}).

\smallskip

\noindent
(ii) When the condition 4 holds, one may ask whether $\pi$ admits
a reduction of structure group to some affine algebraic 
subgroup $H \subset G$. The answer is trivially positive in 
characteristic $0$, but negative in characteristic $p > 0$,
as shown by the following example.

Choose an integer $n \geq 2$ not divisible by $p$, and 
let $C$ denote the curve of equation $y^p = x^n -1$ in the
affine plane $\bA^2$, minus all points $(x,0)$ where
$x$ is a $n$-th root of unity. The group scheme $\bmu_p$ 
of $p$-th roots of unity acts on $\bA^2$ via 
$t \cdot (x,y) = (x, ty)$, and this action leaves $C$
stable. The morphism $\bA^2 \to \bA^2$, 
$(x,y) \mapsto (x,y^p)$ restricts to a $\bmu_p$-torsor
$$
q: C \longrightarrow Y
$$
where $Y \subset \bA^2$ denotes the curve of equation
$y = x^n - 1$ minus all points $(x,0)$ with $x^n = 1$.
Note that $Y$ is smooth, whereas $C$ is singular; 
both curves are rational, since the equation of $C$ may 
be rewritten as $x^n = (y+1)^p$.

Next, let $G$ be an ordinary elliptic curve, so that 
$G$ contains $\bmu_p$, and denote by
$$
\pi : X = G \times^{\mu_p} C \longrightarrow Y
$$
the $G$-torsor obtained by extension of structure
group (which exists since $C$ is affine). Then $X$
is a smooth surface. 

We show that there exists no $G$-morphism 
$f : X \to G/H$, where $H$ is an affine algebraic
(or equivalently, finite) subgroup of $G$. Indeed, $f$ would 
map the rational curve $C \subset X$ and all its translates
by $G$ to points of the elliptic curve $G/H$, and hence 
$f$ would factor through a $G$-morphism 
$G/\bmu_p \to G/H$. As a consequence, $\bmu_p \subset H$, 
a contradiction.

\smallskip

\noindent
(iii) Given a torsor (\ref{eqn:tor}) under a group scheme
(of finite type) $G$, there exists a unique factorization
\begin{equation}\label{eqn:red}
\CD
X @>{p}>> Z = X/G^o_{\red} @>{q}>> Y
\endCD
\end{equation}
where $Z$ is a scheme, $p$ is a torsor under the connected
algebraic group $G^o_{\red}$, and $q$ is finite. (Indeed,
$Z = X \times^G G/G^o_{\red}$ as in the proof of Theorem 
\ref{thm:fact}). 
\end{remarks}

\section{Automorphism groups of torsors}
\label{sec:aut}

To any $G$-torsor $\pi: X \to Y$ as in Section \ref{sec:ass}, 
one associates several groups of automorphisms: 

\begin{itemize}

\item
 the automorphism group of $X$ as a scheme over $Y$, 
denoted by $\Aut_Y(X)$ and called the relative automorphism
group,

\item
the automorphism group of the pair $(X,Y)$, denoted by 
$\Aut(X,Y)$: it consists of those pairs 
$(\varphi,\psi) \in \Aut(X) \times \Aut(Y)$
such that the square
$$
\CD
X @>{\varphi}>> X \\
@V{\pi}VV @V{\pi}VV \\
Y @>{\psi}>> Y \\
\endCD
$$
commutes,

\item
the automorphism group of $X$ viewed as a $G$-scheme,
denoted by $\Aut^G(X)$ and called the equivariant 
automorphism group.

\end{itemize}

Clearly, the projection 
$p_2 : \Aut(X) \times \Aut(Y) \to \Aut(Y)$
yields an exact sequence of (abstract) groups
$$
\CD
1 @>>> \Aut_Y(X) @>>> \Aut(X,Y) @>{p_2}>> \Aut(Y).
\endCD
$$
Also, note that each $G$-morphism $\varphi : X \to X$ 
descends to an morphism $\psi : Y \to Y$, since
$\pi \circ \varphi : X \to Y$ is $G$-invariant and $\pi$
is a categorical quotient. The assignement 
$\varphi \in \Aut^G(X) \mapsto 
\psi =: \pi_*(\varphi) \in \Aut(Y)$ 
yields an identification of $\Aut^G(X)$ with a subgroup of 
$\Aut(X,Y)$, and an exact sequence of groups

\begin{equation}\label{eqn:ex}
\CD
1 @>>> \Aut^G_Y(X) @>>> \Aut^G(X) @>{\pi_*}>> \Aut(Y).
\endCD
\end{equation}

Moreover, we may view the equivariant automorphisms as
those pairs $(\phi,\psi)$ where $\psi \in \Aut(Y)$,
and $\phi : X \to X_{\psi}$ is a $G$-morphism.
Here $X_{\psi}$ denotes the $G$-torsor over $Y$ obtained
by pull-back under $\psi$; note that $\phi$ is an 
isomorphism, as a morphism of $G$-torsors over the same base.

The relative automorphism group is described by the following 
result, which is certainly well-known but for which we could 
not locate any appropriate reference:

\begin{lemma}\label{lem:autrel}
Let $\pi : X \to Y$ be a $G$-torsor. Then the map
$$
\Hom(X,G) \longrightarrow \Aut_Y(X), \quad 
(f: X \to G) \longmapsto \big(F: X \to X, \quad
x \mapsto f(x) \cdot x \big)
$$
is an isomorphism of groups, which restricts to an isomorphism
\begin{equation}\label{eqn:autrel}
\Hom^G(X,G) \cong \Aut^G_Y(X).
\end{equation}
Here $\Hom^G(X,G) \subset \Hom(X,G)$ denotes the subset of morphisms
that are equivariant for the given $G$-action on $X$, and
the $G-$action on itself by conjugation.

If $G$ is commutative, then $\Aut^G_Y(X) \cong \Hom(Y,G)$.
\end{lemma}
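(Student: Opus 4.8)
The engine of the proof is the defining isomorphism of a $G$-torsor, namely the map $\alpha \times p_2 : G \times X \to X \times_Y X$ of (\ref{eqn:act}). The plan is first to construct the inverse assignment to $f \mapsto F_f$. Given $F \in \Aut_Y(X)$, the relation $\pi \circ F = \pi$ makes $(F,\id_X) : X \to X \times_Y X$ a well-defined morphism; composing it with $(\alpha \times p_2)^{-1}$ and projecting to the first factor yields a morphism $\tau_F : X \to G$ characterized by $F(x) = \tau_F(x) \cdot x$. Since $\tau_F$ is extracted from an isomorphism of schemes it is a genuine morphism, and the freeness of the action encoded in (\ref{eqn:act}) makes $\tau_F$ unique. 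In the other direction $F_f(x) = f(x) \cdot x$ is automatically a $Y$-morphism because the action preserves the fibres of $\pi$. I would then check that $f \mapsto F_f$ and $F \mapsto \tau_F$ are mutually inverse, identifying $\Aut_Y(X)$ with $\Hom(X,G)$.

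To upgrade this identification to groups I would transport the composition law. Using the associativity axiom of the action one computes $F_{f_1} \circ F_{f_2}(x) = \bigl( f_1(f_2(x) \cdot x)\, f_2(x) \bigr) \cdot x$, so that composition corresponds on $\Hom(X,G)$ to the product $(f_1 \bullet f_2)(x) = f_1(f_2(x) \cdot x)\, f_2(x)$; with this (gauge-type) group law $f \mapsto F_f$ becomes an isomorphism of groups.

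For the equivariant refinement I would write $F(x) = \tau(x) \cdot x$ and impose $F(g \cdot x) = g \cdot F(x)$; using freeness this is equivalent to $\tau(g \cdot x) = g\, \tau(x)\, g^{-1}$, that is, to $\tau \in \Hom^G(X,G)$ for the conjugation action, so that the bijection restricts as claimed. On this subgroup the twisted law collapses: conjugation-equivariance rewrites $f_1(f_2(x) \cdot x)$ as $f_2(x)\, f_1(x)\, f_2(x)^{-1}$, whence $(f_1 \bullet f_2)(x) = f_2(x)\, f_1(x)$ is simply the pointwise product (read with composition in the appropriate order). This yields the isomorphism (\ref{eqn:autrel}) $\Hom^G(X,G) \cong \Aut^G_Y(X)$.

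Finally, when $G$ is commutative the conjugation action is trivial, so $\Hom^G(X,G)$ is exactly the group of $G$-invariant morphisms $X \to G$; since $\pi : X \to Y$ is a categorical quotient with $\cO_Y = \pi_*(\cO_X)^G$ (recorded in Section \ref{sec:ass}), every such morphism factors uniquely through $\pi$, giving $\Hom^G(X,G) \cong \Hom(Y,G)$ compatibly with the (now pointwise and commutative) group laws, and hence $\Aut^G_Y(X) \cong \Hom(Y,G)$. I expect the main obstacle to lie in the careful handling of the group structure: beyond checking that $\tau_F$ is a morphism of schemes by using genuinely that (\ref{eqn:act}) is an isomorphism, one must verify that the transported law $\bullet$ really is a group law, equivalently that each $F_f$ is invertible, and only then track its collapse to the pointwise product on the equivariant subgroup and the descent through the categorical quotient in the commutative case, so that these identifications are rigorous rather than merely pointwise-plausible.
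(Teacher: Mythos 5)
Your construction of $\tau_F$ from the torsor isomorphism (\ref{eqn:act}), the reduction of equivariance of $F$ to conjugation-equivariance of $f$, and the descent through the categorical quotient in the commutative case all match the paper's proof, which proceeds in exactly the same way (the paper packages the equivariant refinement as taking invariants for the $G$-action $(g\cdot f)(x)=g f(g^{-1}\cdot x)g^{-1}$ on $\Hom(X,G)$ and conjugation on $\Aut_Y(X)$, which is your computation in disguise). You are also more careful than the statement about the group structure: composition really does correspond to the twisted product $f_1\bullet f_2$, which collapses to the (opposite) pointwise product only on $\Hom^G(X,G)$.

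The one step you flag but do not carry out --- that $F_f$ is invertible for every $f\in\Hom(X,G)$ --- is a genuine gap, and in fact it cannot be closed: the first assertion of the lemma is false as literally stated. Take $G=\bG_a$, $Y=\Spec k$ and $X=G$ the trivial torsor; for $f(t)=t^2-t$ one gets $F_f(t)=t^2$, which is not an automorphism, so $f\mapsto F_f$ does not map $\Hom(X,G)$ into $\Aut_Y(X)$ (and $\Aut_Y(X)=\{at+b,\ a\neq 0\}$ is certainly not all of $k[t]$). What the torsor isomorphism actually yields is an injective map $u\mapsto\tau_u$ from $\Aut_Y(X)$ into $\Hom(X,G)$; the paper's own proof has the same lacuna, since it never checks surjectivity. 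The part that survives --- and the only part used later in the paper --- is the equivariant isomorphism (\ref{eqn:autrel}) together with the commutative case, and there the gap closes for free: if $f\in\Hom^G(X,G)$ then $F_f$ is $G$-equivariant (by the same computation you use for the converse direction), hence a morphism of $G$-torsors over the same base $Y$, hence automatically an isomorphism. Make that last observation explicit; with it, your argument for (\ref{eqn:autrel}) and for $\Aut^G_Y(X)\cong\Hom(Y,G)$ is complete.
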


\begin{proof}
Let $u \in \Aut_Y(X)$. Then $u \times \id_X$ is an automorphism of 
$X \times_Y X$ over $X$. In view of the isomorphism (\ref{eqn:act}),
$u \times \id_X$ yields an automorphism of $G \times X$ over $X$, 
thus of the form $(g,x) \mapsto (F(g,x),x)$ for a unique 
$F \in \Hom(G\times X,G)$. 
In other words, $u(g\cdot x) = F(g,x) \cdot x$. Thus, 
$u(x) = f(x) \cdot x$, where $f := F(e_G,-) \in \Hom(X,G)$.
This yields the claimed isomorphism $\Hom(X,G) \cong \Aut_Y(X)$,
equivariant for the action of $G$ on $\Hom(X,G)$ via
$(g \cdot f)(x) = g f(g^{-1}\cdot x) g^{-1}$
and on $\Aut_Y(X)$ by conjugation. Taking invariants, we obtain
the isomorphism (\ref{eqn:autrel}).

If $G$ is commutative, then  $\Hom^G(X,G)$ consists of the 
$G$-invariant morphisms $X \to G$; these are identified
with the morphisms $Y = X/G \to G$.
\end{proof}

The preceding considerations adapt to group functors of
automorphisms, that associate to any scheme $S$ the 
groups $\Aut_{Y \times S}(X \times S)$, 
$\Aut_S(X \times S, Y \times S)$ and their equivariant analogues.
We will denote these functors by $Aut_Y(X)$, $Aut(X,Y)$,
$Aut^G(X)$ and $Aut^G_Y(X)$. The exact sequence (\ref{eqn:ex})
readily yields an exact sequence of group functors
\begin{equation}\label{eqn:exfun}
\CD
1 @>>> Aut^G_Y(X) @>>> Aut^G(X) @>{\pi_*}>> Aut(Y).
\endCD
\end{equation}
Also, by Lemma \ref{lem:autrel}, we have a functorial isomorphism
$$
\Aut_{Y \times S}(X \times S) \cong \Hom(X \times S, G).
$$
In other words, $Aut_Y(X)$ is isomorphic to the group functor 
$$
Hom(X,G) : S \longmapsto \Hom(X \times S,G).
$$ 
As a consequence, $Aut_Y^G(X)$ is isomorphic to 
$Hom^G(X,G) : S \mapsto \Hom^G(X \times S,G)$. 
This readily yields isomorphisms
$$
\Lie \, \Aut_Y(X) \cong \Hom\big( X, \Lie(G) \big) 
\cong \cO(X) \otimes \Lie(G),
$$
$$
\Lie \, \Aut_Y^G(X) \cong \Hom^G\big( X, \Lie(G) \big) 
\cong \big( \cO(X) \otimes \Lie(G) \big)^G.
$$

We now obtain a finiteness result for $Aut^G(X)$, analogous to
a theorem of Morimoto (see \cite[Th\'eor\`eme, p. 158]{Mo}):

\begin{theorem}\label{thm:fin}
Consider a $G$-torsor $\pi: X \to Y$ where $G$ is a group scheme, 
$X$ a scheme, and $Y$ a proper scheme. Then the functor $Aut^G(X)$ 
is represented by a group scheme, locally of finite type, with 
Lie algebra $\Gamma(X,T_X)^G$.
\end{theorem}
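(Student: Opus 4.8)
The plan is to combine the representability of $\Aut(Y)$, which holds since $Y$ is proper, with a description of the kernel and of the fibres of $\pi_*$ as sections of a suitable proper bundle over $Y$. Throughout I use the exact sequence of group functors (\ref{eqn:exfun}),
$$
1 \longrightarrow Aut^G_Y(X) \longrightarrow Aut^G(X)
\stackrel{\pi_*}{\longrightarrow} Aut(Y),
$$
together with the identification of $Aut^G_Y(X)$ with $Hom^G(X,G)$ from Lemma \ref{lem:autrel}. Since $Y$ is proper, $Aut(Y)$ is represented by a group scheme $\Aut(Y)$, locally of finite type, by \cite[Theorem 3.7]{MO}.

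First I would treat the kernel. By Lemma \ref{lem:autrel} a point of $Aut^G_Y(X)$ is a $G$-equivariant morphism $X \to G$ for the conjugation action, equivalently a section of the associated group bundle $\cG := X \times^G G \to Y$. The crucial point is that, although $\Hom(X,G)$ is huge when $X$ is non-proper (already $\cO(X)$ may be infinite-dimensional), the invariants are controlled by the proper base, since $\cO(X)^G = \cO(Y)$ is finite-dimensional. To exploit this I would choose a $G$-equivariant completion $G \hookrightarrow \bar G$ for the conjugation action, with $\bar G$ proper, and form the proper bundle $\bar\cG := X \times^G \bar G \to Y$ (which exists as an algebraic space by \cite[Corollary 1.2]{KM}, and as a scheme when $\bar G$ is projective with a conjugation-linearized ample sheaf). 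Because $Y$ is proper, the functor $H$ of sections of $\bar\cG \to Y$ is represented by a scheme or algebraic space locally of finite type, and the condition that a section factor through the open subscheme $\cG$ is open: the non-factoring locus is the image under the projection $H \times Y \to H$ (proper, as $Y$ is) of the closed preimage of $\bar\cG \setminus \cG$ under the universal section, hence is closed. This represents $Aut^G_Y(X)$ by an open subobject, locally of finite type.

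Next I would promote this to all of $Aut^G(X)$ by working relatively over $T := \Aut(Y)$. The universal automorphism of $Y \times T$ over $T$ produces a second $G$-torsor $\mathcal X'$ over $Y \times T$, the pullback of $X \times T$, and under the description of equivariant automorphisms as pairs $(\phi,\psi)$ one identifies $Aut^G(X)$ with the functor of $G$-torsor isomorphisms $X \times T \cong \mathcal X'$ over $Y \times T$. This isomorphism functor is a relative form of the section functor just treated: it is the locus of sections of a proper compactification of $\underline{\mathrm{Isom}}^G(X\times T,\mathcal X') \to Y\times T$ (a bundle with fibre $\cong G$) that land in the open ``isomorphism'' locus. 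The same two ingredients — representability of sections of a proper bundle over the proper base $Y$, and openness of the factorization condition — show that $Aut^G(X)$ is represented relative to $T$, hence by a group algebraic space locally of finite type over $k$; being a group algebraic space locally of finite type over a field, it is a scheme.

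Finally, the Lie algebra is read off directly from the functor, independently of (\ref{eqn:lie}) (which would require $X$ proper): a point of $Aut^G(X)(k[\varepsilon])$ reducing to the identity is a $G$-equivariant automorphism of $X \times \Spec k[\varepsilon]$ of the form $\id + \varepsilon D$ with $D$ a $G$-invariant derivation of $\cO_X$, so $\Lie\, Aut^G(X) \cong \Gamma(X,T_X)^G$. I expect the main obstacle to be the construction of the equivariant completion $\bar G$ for a general, possibly non-smooth, disconnected or non-affine group scheme $G$, together with the verification that $\bar\cG$ is well enough behaved (a scheme, or an algebraic space to which the above applies); reducing to a connected algebraic group via the factorization of Remark \ref{rem:fact}(iii), or via Frobenius kernels as in the proof of Theorem \ref{thm:fact}, is the natural way to handle this.
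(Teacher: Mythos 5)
Your route is genuinely different from the paper's, and while it is workable in outline, two of its load-bearing steps are asserted rather than established. The paper does not split $Aut^G(X)$ along the exact sequence (\ref{eqn:exfun}) at all. After reducing to the case where $G$ is a connected affine algebraic group (via the factorization $X \to X/G_{\aff} \to X/G^o_{\red} \to Y$, which makes $X/G_{\aff}$ proper and exhibits $Aut^G(X)$ as a closed subfunctor of $Aut^{G_{\aff}}(X)$), it embeds $G$ in $\GL(V)$, takes the closure $Z$ of $G$ in the projective completion of $\End(V)$ --- a projective $G\times G$-equivariant compactification of $G$ carrying an ample $G\times G$-linearized invertible sheaf --- and forms the proper scheme $X\times^G Z \supset X$. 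Every equivariant automorphism of $X\times S$ over $S$ extends canonically to $X\times^G Z\times S$, and $Aut^G(X)$ is identified with the closed subfunctor of equivariant automorphisms of $X\times^G Z$ stabilizing the boundary $X\times^G(Z\setminus G)$; representability then follows from a single application of \cite[Theorem 3.7]{MO} to the proper scheme $X\times^G Z$. This one global compactification replaces both halves of your argument.

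The concrete gaps in your version are these. First, you invoke representability of the functor of sections of $\bar\cG\to Y$ (and, in the relative step, of the $\mathrm{Isom}$-bundle over $Y\times T$). Since $Y$ is only assumed proper, not projective, Grothendieck's Hom-scheme does not apply directly; you need Hilbert functors of proper algebraic spaces together with the theorem that a group algebraic space locally of finite type over a field is a scheme --- machinery the paper avoids entirely. Second, and more seriously, the relative step over $T=\Aut(Y)$ is where essentially all the content lies, and it is only gestured at: to compactify $\mathrm{Isom}^G(X\times T,\cX')\to Y\times T$ fibrewise by a projective morphism you must descend an equivariant compactification of $G$ along the $G\times G$-torsor $(X\times T)\times_{Y\times T}\cX'$, i.e., you need exactly the two-sided compactification $Z$ with its linearized ample sheaf, and hence also the prior reduction to $G$ a connected affine algebraic group, which you defer to a closing remark instead of carrying out. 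Your openness argument for the condition that a section avoid the boundary, and your direct computation of the Lie algebra on $k[\varepsilon]$-points, are both correct. But once $Z$ is built, the paper's one-step argument is strictly simpler and proves the same statement.
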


\begin{proof}
The assertion on the Lie algebra follows from the 
$G$-isomorphism (\ref{eqn:lie}).

To show the representability assertion, we first reduce to 
the case that $G$ is a connected affine algebraic group. 
Let $G_{\aff}$ denote the largest closed normal affine subgroup
of $G$, or equivalently of $G^o_{\red}$. Then $Aut^G(X)$ is a 
closed subfunctor of $Aut^{G_{\aff}}(X)$. Moreover, 
the factorizations (\ref{eqn:fact}) and (\ref{eqn:red})
yield a factorization of $\pi$ as
$$
\CD
X @>{p}>> X/G_{\aff} @>{q}>> X/G^o_{\red} @>{r}>> Y
\endCD
$$ 
where $p$ is a torsor under $G_{\aff}$, $q$ a torsor under 
$G^o_{\red}/G_{\aff}$, and $r$ is a finite morphism. 
Since $q$ and $r$ are proper, $X/G_{\aff}$ is proper as well. 
This yields the desired reduction.

Next, we may embed $G$ as a closed subgroup of $\GL(V)$ 
for some finite-dimensional vector space $V$. Let $Z$ denote
the closure of $G$ in the projective completion of $\End(V)$.
Then $Z$ is a projective variety equipped with an action of
$G \times G$ (arising from the $G \times G$-action on $\End(V)$
via left and right multiplication) and with an ample 
$G \times G$-linearized invertible sheaf. By construction, $G$ 
(viewed as a $G\times G$-variety via left and right 
multiplication) is the open dense $G \times G$-orbit in $Z$. 

As seen in Section \ref{sec:ass}, the associated fiber bundle
$X \times^G Z$ (for the left $G$-action on $Z$) exists; it is
equipped with a $G$-action arising from the right $G$-action
on $Z$. Moreover, $X \times^G Z$ contains 
$X \times^G G \cong X$ as a dense open $G$-stable subscheme.
Also, recall the cartesian square
$$
\CD
X \times Z @>{p}>> X \\
@V{\varpi}VV @V{\pi}VV \\
X\times^G Z @>{q}>> Y.\\
\endCD
$$
Since $Z$ is complete and $\pi$ is faithfully flat, it follows 
that $q$ is proper, and hence so is $X \times^G Z$.

Now let $S$ be a scheme, and $\varphi \in \Aut^G_S(X \times S)$.
Then $\varphi$ yields an $S$-automorphism 
$$
\phi: X \times Z \times S \longrightarrow X \times Z \times S, 
\quad (x,z,s) \longmapsto \big( \varphi(x,s), z,s).
$$
Consider the action of $G \times G$ on $X \times Z \times S$ 
given by
$$
(g_1,g_2) \cdot (x,z,s) = 
\big( g_1 \cdot x, (g_1,g_2)\cdot z, s \big).
$$
Then $\phi$ is $G \times G$-equivariant, and hence yields 
an automorphism $\Phi \in \Aut^G_S(X \times^G Z \times S)$
which stabilizes $X \times^G(Z \setminus G) \times S$. 
Moreover, the assignement $\varphi \mapsto \Phi$ identifies
$\Aut^G_S(X \times S)$ with the stabilizer of
$X \times^G(Z \setminus G) \times S$ in 
$\Aut^G_S(X \times^G Z \times S)$. Thereby, $Aut^G(X)$
is identified with a closed subfunctor of $Aut(X \times^G Z)$;
the latter is represented by a group scheme of finite type,
since $X \times^G Z$ is proper.
\end{proof}

For simplicity, we denote by $\Aut^G(X)$ the group scheme
defined in the preceding theorem. Since $Aut^G_Y(X)$ is a 
closed subfunctor of $Aut^G(X)$, it is also represented
by a group scheme (locally of finite type) that we denote
likewise by $\Aut^G_Y(X)$. Further properties of this relative
automorphism group scheme are gathered in the following:

\begin{proposition}\label{prop:autrel}
Let $\pi : X \to Y$ be a torsor under a connected algebraic
group $G$, where $Y$ is a proper scheme. Then the 
factorization 
$\CD 
X @>{p}>> Z = X/G_{\aff} @>{q}>> Y 
\endCD$ 
(obtained in Corollary \ref{cor:fact}) 
yields an exact sequence of group schemes
\begin{equation}\label{eqn:exsch} 
\CD
1 @>>> \Aut^{G_{\aff}}_Z(X) @>>> \Aut^G_Y(X) 
@>{p_*}>> \Aut^{A(G)}_Y(Z).
\endCD
\end{equation}
Moreover, $\Aut^{G_{\aff}}_Z(X)$ is affine of finite type, 

If $Y$ is a (complete) variety, then the neutral component 
of $\Aut^{A(G)}_Y(Z)$ is just $A(G)$; it is contained in the 
image of $p_*$.
\end{proposition}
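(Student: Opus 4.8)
The plan is to reduce everything to the descriptions of relative equivariant automorphisms furnished by Lemma \ref{lem:autrel} and its functorial refinement, applied to the two torsors $p : X \to Z$ (under $G_{\aff}$) and $q : Z \to Y$ (under $A(G)$). Writing $\Aut^G_Y(X) \cong \Hom^G(X,G)$ for the conjugation action, and $\Aut^{A(G)}_Y(Z) \cong \Hom(Y,A(G))$ (the commutative case of the Lemma), I would identify $p_*$ with the map sending a conjugation-equivariant $f : X \to G$ to the composite $X \to G \to A(G)$; since $A(G)$ is commutative this composite is conjugation-invariant, hence $G$-invariant, so it descends to $Y$. Thus $\ker p_*$ consists of those $f \in \Hom^G(X,G)$ taking values in $G_{\aff}$, i.e. of the $G$-conjugation-equivariant morphisms $X \to G_{\aff}$, whereas $\Aut^{G_{\aff}}_Z(X) \cong \Hom^{G_{\aff}}(X,G_{\aff})$ is the a priori larger group of merely $G_{\aff}$-equivariant such morphisms. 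The whole point of the exactness is to match these two descriptions.

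I would first establish that $\Aut^{G_{\aff}}_Z(X)$ is affine of finite type, since this is needed for the exactness argument. Note that $Z$ is proper: $q$ is proper by Corollary \ref{cor:fact} and $Y$ is proper. Via Lemma \ref{lem:autrel} the group scheme $\Aut^{G_{\aff}}_Z(X)$ is the group $\Gamma(Z,\cG)$ of sections of the conjugation bundle $\cG := X \times^{G_{\aff}} G_{\aff}$, an affine group scheme over $Z$ (it exists as a scheme since $G_{\aff}$ is affine). Embedding $G_{\aff} \hookrightarrow \GL(V)$ and then $\GL(V) \hookrightarrow \End(V) \oplus \End(V)$ by $g \mapsto (g,g^{-1})$ gives a closed immersion that is equivariant for the (linear) conjugation action, and hence realizes $\cG$ as a closed subscheme of the vector bundle $\cE := X \times^{G_{\aff}} (\End(V) \oplus \End(V))$. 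Because $Z$ is proper, the sections of $\cE$ form a finite-dimensional vector space, and landing in the closed subscheme $\cG$ is a closed condition; therefore $\Gamma(Z,\cG)$ is a closed subscheme of an affine space, so it is affine of finite type.

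For exactness I would exploit the conjugation action of $G$ on $\Aut^{G_{\aff}}_Z(X)$, given by $g \cdot F = (x \mapsto g \cdot F(g^{-1} \cdot x))$. Each such $g \cdot F$ again lies over $Z$ and commutes with $G_{\aff}$, so this is an algebraic action of $G$ by group automorphisms; moreover it factors through $A(G)$, since every element of $\Aut^{G_{\aff}}_Z(X)$ commutes with $G_{\aff}$. One thus obtains an action of the abelian variety $A(G)$ on the \emph{affine} group scheme $\Aut^{G_{\aff}}_Z(X)$, and by rigidity every orbit map $A(G) \to \Aut^{G_{\aff}}_Z(X)$ (a morphism from a complete connected variety to an affine scheme) is constant, so the action is trivial. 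Consequently every $G_{\aff}$-equivariant automorphism of $X$ over $Z$ is automatically $G$-equivariant, which provides the inclusion $\Aut^{G_{\aff}}_Z(X) \hookrightarrow \Aut^G_Y(X)$ with image precisely $\ker p_*$, yielding the exact sequence (\ref{eqn:exsch}). I expect this rigidity step to be the main obstacle: it is what forces the affineness statement to be proved beforehand, and it is where the distinction between $G$- and $G_{\aff}$-equivariance is dissolved.

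Finally, for the last assertion I would use that $Y$ is a complete variety, so $\cO(Y)=k$. Combining $\Aut^{A(G)}_Y(Z) \cong \Hom(Y,A(G))$ with the Lie algebra computation preceding Theorem \ref{thm:fin} gives $\Lie\,\Aut^{A(G)}_Y(Z) \cong (\cO(Z) \otimes \Lie(A(G)))^{A(G)} = \cO(Y) \otimes \Lie(A(G)) = \Lie(A(G))$, so the neutral component has dimension at most $\dim A(G)$; since the translations embed $A(G)$ as a smooth connected subgroup of that dimension, the neutral component is exactly $A(G)$. To see that this $A(G)$ lies in the image of $p_*$, I would invoke the anti-affine subgroup $G_{\ant} \subset G$: it is central in $G$ and satisfies $G = G_{\aff}\cdot G_{\ant}$, so $G_{\ant} \to A(G)$ is surjective. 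For any central $c \in G_{\ant}$ the translation $x \mapsto c \cdot x$ is a $G$-equivariant automorphism of $X$ over $Y$, and its image under $p_*$ is translation of $Z$ by the class of $c$ in $A(G)$; as $c$ ranges over $G_{\ant}$ these translations exhaust $A(G)$, giving the desired containment.
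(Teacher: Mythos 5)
Your proposal is correct and follows essentially the same route as the paper: affineness of $\Aut^{G_{\aff}}_Z(X)$ via a $G_{\aff}$-equivariant closed immersion into a representation and sections of the associated vector bundle over the proper scheme $Z$; exactness via the triviality of the induced $A(G)$-action on that affine group scheme; the neutral component of $\Aut^{A(G)}_Y(Z)$ via the Lie algebra computation using $\cO(Z)=k$; and the containment of $A(G)$ in the image of $p_*$ via central elements of $G$ surjecting onto $A(G)$ (the paper uses the neutral component of the center, citing Rosenlicht, where you use $G_{\ant}$ — the same mechanism). The only differences are cosmetic, such as the explicit embedding $g \mapsto (g,g^{-1})$ into $\End(V)\oplus\End(V)$.
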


\begin{proof}
We first show that $\Aut^{G_{\aff}}_Z(X)$ is affine of finite type.
By Lemma \ref{lem:autrel}, we have
$$
Aut^{G_{\aff}}_Z(X) \cong Hom^{G_{\aff}}(X, G_{\aff}).
$$
Moreover, there exists a closed $G_{\aff}$-equivariant
immersion of $G_{\aff}$ into an affine space $V$ 
where $G_{\aff}$ acts via a representation. Thus,
$Aut^{G_{\aff}}_Z(X)$ is a closed subfunctor of 
$Hom^{G_{\aff}}(X, V)$. But the latter is represented by 
an affine space (of finite dimension), namely, the space of 
global sections of the associated vector bundle 
$X \times^{G_{\aff}} V$ over the proper scheme $X/G_{\aff} =Z$.
This completes the proof.

Next, we obtain (\ref{eqn:exsch}). 
We start with the exact sequence (\ref{eqn:exfun}) for the 
$G_{\aff}$-torsor $p$, which translates into an exact sequence 
of group schemes
$$
\CD
1 @>>> \Aut^{G_{\aff}}_Z(X) @>>> \Aut^{G_{\aff}}(X) 
@>{p_*}>> \Aut(Z). 
\endCD
$$
Taking $G$-invariants yields the exact sequence of group schemes
$$
\CD
1 @>>> \Aut^G_Z(X) @>>> \Aut^G_Y(X) 
@>{p_*}>> \Aut(Z).
\endCD
$$
But $G$ acts on the affine scheme $\Aut^{G_{\aff}}_Z(X)$
through its quotient $G/G_{\aff} = A(G)$, an abelian
variety. So this $G$-action must be trivial, that is,
$\Aut^G_Z(X) = \Aut^{G_{\aff}}_Z(X)$. 

We now show that $A(G) = \Aut^{A(G),o}_Y(Z)$ if $Y$ 
(or equivalently $Z$) is a variety. Since $A(G)$ is commutative, 
we have a homorphism $f : A(G) \to \Aut^{A(G)}_Y(Z)$. 
The induced homomorphism of Lie algebras is the
natural map
$$
\Lie \, A(G) \longrightarrow \Lie \, \Aut^{A(G)}_Y(Z) 
= \big( \cO(Z) \otimes \Lie \, A(G) \big)^{A(G)}
$$
which is an isomorphism since $\cO(Z) = k$. This yields our assertion.

Finally, we show that $A(G)$ is contained in the
image of $p_*$. Indeed, the neutral component of 
the center of $G$ is identified with a subgroup
of $\Aut_Y^G(X)$, and is mapped onto $A(G)$ under the
quotient homomorphism $G \to G/G_{\aff}$ (as follows from
\cite[Corollary 5, p. 440]{Ro}).
\end{proof}

Observe that the exact sequence (\ref{eqn:exsch}) yields an 
analogue for torsors of Chevalley's structure theorem;
it gives back that theorem when applied to the trivial torsor $G$.

\section{Lifting automorphisms for abelian torsors}
\label{sec:abel}

We begin by determining the relative equivariant
automorphism groups of torsors under abelian varieties:

\begin{proposition}\label{prop:alb}
Let $G$ be an abelian variety and $\pi: X \to Y$ a $G$-torsor,
where $X$ and $Y$ are complete varieties. Then the group
scheme $Aut_Y^G(X)$ is isomorphic to 
$\Hom_{\gp}\big( A(Y),G \big) \times G$.
Here $A(Y)$ denotes the Albanese variety of $Y$, and 
$\Hom_{\gp}\big( A(Y),G \big) $ denotes the space of
homomorphisms of algebraic groups $A(Y) \to G$; this is a free 
abelian group of finite rank, viewed as a constant group scheme.
\end{proposition}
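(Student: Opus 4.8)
The plan is to reduce everything to the structure of the group functor $Hom(Y,G)\colon S\mapsto\Hom(Y\times S,G)$, and then to analyse that functor by means of a base point and the universal property of the Albanese variety. By Lemma \ref{lem:autrel}, applied in the commutative case and in its functorial form, there is an isomorphism of group functors $Aut_Y^G(X)\cong Hom(Y,G)$; since $Aut^G_Y(X)$ is represented by a group scheme locally of finite type (Theorem \ref{thm:fin}, $Y$ being a complete, hence proper, variety), it suffices to describe $Hom(Y,G)$ as a group scheme. I would fix a base point $y_0\in Y(k)$ and consider evaluation at $y_0$, a homomorphism of group functors $\epsilon\colon Hom(Y,G)\to G$, $g\mapsto g|_{\{y_0\}\times S}$. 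This $\epsilon$ is split by the homomorphism sending $c\in G(S)$ to the composite of the projection $Y\times S\to S$ with $c\colon S\to G$. Because $G$ is commutative the functor $Hom(Y,G)$ is commutative, so the split short exact sequence is a direct product and I obtain $Hom(Y,G)\cong K\times G$, where $K:=\ker(\epsilon)$ classifies the morphisms $Y\times S\to G$ vanishing along $\{y_0\}\times S$.

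It then remains to identify $K$ with the constant group scheme on $\Hom_{\gp}(A(Y),G)$. On $k$-points, $K(k)$ is the set of morphisms $f\colon Y\to G$ with $f(y_0)=0$; by the universal property of the Albanese variety each such $f$ factors uniquely as $\bar f\circ\mathrm{alb}$ for a homomorphism $\bar f\colon A(Y)\to G$, whence $K(k)\cong\Hom_{\gp}(A(Y),G)$. This is a free abelian group of finite rank, by the classical finiteness theorem for homomorphisms between abelian varieties, which accounts for the last clause of the statement.

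To upgrade the bijection on $k$-points to an identification of group schemes, I would show that $K$ is an \'etale, hence constant, group scheme, and the cleanest route is through the Lie algebra. Combining the isomorphism $\Lie\,\Aut^G_Y(X)\cong(\cO(X)\otimes\Lie(G))^G$ recorded before the theorem with the facts that $G$ acts trivially on $\Lie(G)$ by conjugation (as $G$ is commutative) and that $\cO(X)^G=\cO(Y)=k$ (as $Y$ is a complete variety), one gets $\Lie\,Hom(Y,G)\cong\Lie(G)$. Consequently the subgroup scheme of constant maps, which is smooth, connected and isomorphic to $G$, already exhausts the whole Lie algebra; a dimension count then forces $Hom(Y,G)^o$ to have dimension $\dim G$ and to be smooth (by the criterion $\dim_k\Lie\Gamma=\dim\Gamma$ for group schemes over the perfect field $k$), hence to equal $G$. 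In the direct product $Hom(Y,G)\cong K\times G$ this means $K^o\subseteq K\cap Hom(Y,G)^o=\{e\}$, so $K$ is \'etale and therefore the constant group scheme on $K(k)\cong\Hom_{\gp}(A(Y),G)$. Reassembling the two factors yields $Aut^G_Y(X)\cong\Hom_{\gp}(A(Y),G)\times G$.

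The main obstacle is exactly this last step: ruling out infinitesimal or positive-dimensional deformations of an Albanese homomorphism inside families over a non-reduced or positive-dimensional base, i.e.\ passing from $k$-points to the full functor. I expect the Lie-algebra argument above to dispose of it, the decisive inputs being $\cO(Y)=k$ and the smoothness criterion over the perfect field $k$; alternatively, one can argue directly that any $g\colon Y\times S\to G$ vanishing on $\{y_0\}\times S$ is constant in $S$ over connected $S$, by applying the rigidity lemma (\cite[p.~43]{Mu}) to reduce $g$ to a morphism factoring through the projection to $S$ and then through $\{y_0\}\times S$.
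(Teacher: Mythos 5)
Your proof is correct, but the decisive step is handled by a genuinely different mechanism than in the paper. Both arguments start from Lemma \ref{lem:autrel} (so that everything reduces to describing the functor $S \mapsto \Hom(Y\times S,G)$), both fix a base point $y_0$, and both invoke the universal property of the Albanese variety to identify the pointed morphisms $Y\to G$ on $k$-points. The difference is in how one rules out nontrivial $S$-dependence. The paper works directly at the level of the functor: given $f\colon Y\times S\to G$, it applies the scheme-theoretic rigidity lemma of \cite[Theorem 1.7]{SS} to $(y,s)\mapsto f(y,s)-f(y,s_0)$ to obtain the decomposition $f(y,s)=\varphi(y)+\psi(s)$, after which the Albanese property finishes the proof; note that this computes the functor outright and so yields representability as a byproduct. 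You instead take representability as an input (legitimately, via Theorem \ref{thm:fin} and the remark following it), split off the factor $G$ by evaluation at $y_0$, and then show the kernel $K$ is \'etale by the tangent-space computation $\Lie\, Hom(Y,G)\cong \big(\cO(X)\otimes\Lie(G)\big)^G \cong \cO(Y)\otimes\Lie(G)\cong\Lie(G)$ together with the smoothness criterion over a perfect field; this forces $Hom(Y,G)^o=G$ and $K^o=\{e\}$, whence $K$ is the constant group scheme on $K(k)\cong\Hom_{\gp}(A(Y),G)$. Your Lie-algebra route is exactly the device the paper uses in the proof of Proposition \ref{prop:autrel} to show $A(G)=\Aut^{A(G),o}_Y(Z)$, so it fits naturally into the paper's toolkit; its cost is the reliance on Theorem \ref{thm:fin}, where the paper's rigidity argument is self-contained. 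The alternative you sketch in your last sentence (rigidity applied over a connected base $S$) is essentially the paper's own proof.
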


\begin{proof}
By Lemma \ref{lem:autrel}, we have a functorial isomorphism
$$
\Aut^G_{Y \times S}(X \times S) \cong \Hom(Y \times S,G). 
$$
Choose a point $y_0 \in Y$. For any $f \in \Hom(Y \times S,G)$,
consider the morphism 
$$
\varphi : Y \times S \longrightarrow G, \quad
(y,s) \longmapsto f(y,s) - f(y_0,s)
$$
where the group law of the abelian variety $G$ is denoted 
additively. We claim that $\varphi$ factors through the 
projection $Y \times S \to Y$. For this, we may replace $k$ 
with a larger field, and assume that $S$ has a $k$-rational
point $s_0$; we may also assume that $S$ is connected.
Then the morphism 
$$
\psi : Y \times S \longrightarrow G, \quad
(y,s) \longmapsto f(y,s) - f(y,s_0)
$$
maps $Y \times \{s_0\}$ to a point. By a scheme-theoretic
version of the rigidity lemma (see \cite[Theorem 1.7]{SS}), 
it follows that $\psi$ factors through the projection
$Y \times S \to S$. Thus, 
$f(y,s) - f(y,s_0) = f(y_0,s) - f(y_0,s_0)$
which shows the claim.

By that claim, we may write 
$$
f(y,s) = \varphi(y) + \psi(s)
$$
where $\varphi : Y \to G$ and $\psi : S \to G$ are morphisms 
such that $\varphi(y_0) = 0$. 
Now let $a : Y \to A(Y)$ be the Albanese morphism, normalized 
so that $a(y_0) = 0$. Then $\varphi$ factors through a 
unique homorphism $\Phi : A(Y) \to A$, and 
$f = (\Phi \circ a) + \psi$
where $\Phi$ is an $S$-point of $\Hom_{\gp}\big( A(Y),G \big)$,
and $\psi$ an $S$-point of $G$.
\end{proof}

Next, we obtain a preliminary result which again is certainly
well-known, but for which we could not locate any reference:

\begin{lemma}\label{lem:gal}
Assume that $k$ has characteristic $0$.
Let $\pi : Z \to Y$ be a finite \'etale morphism, where
$Y$ and $Z$ are complete varieties. 

Then the natural homomorphism $\pi_* : \Aut(Z,Y) \to \Aut(Y)$ 
restricts to an isogeny $\Aut^o(Z,Y) \to \Aut^o(Y)$ 
on neutral components.

If $\pi$ is a Galois cover with group $F$ (that is, an $F$-torsor),
then $\Aut^o(Z,Y)$ is the neutral component of $\Aut^F(Y)$.
\end{lemma}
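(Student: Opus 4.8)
The plan is to handle the two assertions separately, reducing the first to a Lie-algebra computation and the second to a rigidity property of the finite group $F$. First I would record that $\Aut(Z,Y)$ is represented by a closed subgroup scheme of $\Aut(Z)\times\Aut(Y)$, namely the one cut out by the relation $\pi\varphi=\psi\pi$; since $Y$ and $Z$ are proper both factors are locally of finite type, and as $k$ has characteristic $0$ all these group schemes are smooth, so their neutral components are connected algebraic groups. The homomorphism $\pi_*$ sends $(\varphi,\psi)$ to $\psi$, whence its kernel is the group $\Aut_Y(Z)$ of automorphisms of $Z$ over $Y$. By Lemma \ref{lem:autrel} applied with $G=F$, this kernel is isomorphic to $\Hom(Z,F)$, which equals $F$ because $Z$ is connected and $F$ is finite; in particular $\ker\pi_*$ is finite. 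It then suffices to prove that $\pi_*$ induces a surjection on neutral components, and I would obtain this by showing that its differential is bijective.

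The key step is the computation of $d\pi_*$. Since $\pi$ is \'etale we have $T_Z\cong\pi^*T_Y$, and a global vector field $\xi$ on $Z$ is $\pi$-related to a field $\eta$ on $Y$ exactly when $\xi=\pi^*\eta$; conversely each $\eta\in\Gamma(Y,T_Y)$ pulls back to such a field. Consequently the tangent space to $\Aut(Z,Y)$ at the identity, which consists of the $\pi$-related pairs $(\xi,\eta)$ in $\Gamma(Z,T_Z)\times\Gamma(Y,T_Y)$, is carried isomorphically onto $\Gamma(Y,T_Y)=\Lie\,\Aut(Y)$ by the map $(\xi,\eta)\mapsto\eta$, which is precisely $d\pi_*$. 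Thus $d\pi_*$ is an isomorphism. In characteristic $0$ a homomorphism of connected algebraic groups whose differential is bijective is an isogeny: surjectivity of the differential makes the image open, hence equal to the connected target, while injectivity forces the kernel to be finite. This proves the first assertion.

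For the second assertion I would exploit that a connected group cannot act nontrivially on a finite group. Regard $F$ as the deck group $\Aut_Y(Z)\subset\Aut(Z)$. The computation above shows that $\Aut(Z,Y)$ normalizes $F$ (conjugating a relative automorphism by a pair $(\varphi,\psi)$ again yields a relative automorphism), so there is a conjugation homomorphism $\Aut(Z,Y)\to\Aut(F)$. Its restriction to the connected group $\Aut^o(Z,Y)$ takes values in the discrete group $\Aut(F)$ and is therefore trivial, so $\Aut^o(Z,Y)$ centralizes $F$; equivalently it lies in the equivariant automorphism group $\Aut^F(Z)$ of the $F$-torsor $\pi:Z\to Y$ (a group scheme by Theorem \ref{thm:fin}). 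Being connected it lies in $\Aut^F(Z)^o$. The reverse inclusion is immediate, since an $F$-equivariant automorphism descends to $Y$, so $\Aut^F(Z)^o$ is a connected subgroup of $\Aut(Z,Y)$ and hence contained in $\Aut^o(Z,Y)$. This gives the asserted equality.

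The main obstacle is the surjectivity in the first assertion, equivalently the lifting of the connected automorphism group of the base through the cover, whereas the finiteness of the kernel and the rigidity argument are routine. Both the \'etale hypothesis and the characteristic $0$ assumption enter decisively: the former in the identification $T_Z\cong\pi^*T_Y$ used to compute the differential, and the latter in passing from a bijective differential to an isogeny and in forcing smoothness of the automorphism group schemes.
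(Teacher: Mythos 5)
Your proof is correct, but the first assertion is established by a genuinely different mechanism than in the paper. The paper argues at the level of points: for $h$ ranging over the connected group $\Aut^o(Y)$, the pulled-back covers $Z_h$ are all isomorphic to $Z$ by the rigidity of finite \'etale covers in connected families (\cite[Expos\'e X, Corollaire 1.9]{SGA1}), so every such $h$ lifts to an automorphism of $Z$, giving surjectivity directly, with finiteness of the kernel by Galois theory. You instead argue infinitesimally: the \'etale hypothesis gives $T_Z\cong\pi^*T_Y$, hence every global vector field on $Y$ has a unique $\pi$-related lift, so $d\pi_*$ is bijective on $\Lie\,\Aut(Z,Y)$; in characteristic $0$ all the automorphism group schemes are smooth and a homomorphism of connected algebraic groups with bijective differential is an isogeny. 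Both routes are valid; yours trades the fundamental-group input for a Lie-algebra computation and a char-$0$ smoothness argument (and, as you note, both inputs genuinely require characteristic $0$ at some stage), while the paper's version has the mild advantage of exhibiting the lift of each individual automorphism explicitly. Your treatment of the Galois case coincides with the paper's: the connected group $\Aut^o(Z,Y)$ must centralize the finite normal subgroup $F=\ker\pi_*$, hence lands in $\Aut^F(Z)$, and the reverse inclusion is formal. One small slip: in your first paragraph you justify the finiteness of $\ker\pi_*=\Aut_Y(Z)$ by ``Lemma \ref{lem:autrel} with $G=F$,'' but at that point $\pi$ is only assumed finite \'etale, not Galois, so there is no $F$ to invoke; either quote the standard fact that the deck group of a connected finite \'etale cover is finite, or simply observe that finiteness of the kernel on neutral components already follows from the injectivity of $d\pi_*$ that you prove next.
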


\begin{proof}
We set for simplicity $H := \Aut^o(Y)$; this is a connected
algebraic group in view of the characteristic-$0$ assumption.
For any $h \in H(k)$, denote by $Z_h$ the \'etale cover of 
$Y$ obtained from $Z$ by pull-back under $h$. Then the covers
$Z_h$, $h \in H(k)$, are all isomorphic by 
\cite[Expos\'e X, Corollaire 1.9]{SGA1}. Thus,
every $h \in H(k)$ lifts to some $\tilde h \in \Aut(Z)(k)$.
In other words, the image of the projection 
$\pi_* : \Aut(Z,Y) \to \Aut(Y)$ contains $H$. It follows
that $\pi_*$ restricts to a surjective homomorphism
$\Aut^o(Z,Y) \to \Aut^o(Y)$; its kernel is finite by 
Galois theory.

If $\pi$ is an $F$-torsor, then $\pi_*$ has kernel $F$,
by Galois theory again. In particular, $\Aut(Z,Y)$ normalizes 
$F$. The action of the neutral component $\Aut^o(Z,Y)$ by 
conjugation on the finite group $F$ must be trivial; 
this yields the second assertion.
\end{proof}

\begin{remark}
In particular, with the notation and assumptions of the preceding 
lemma, all elements of $\Aut^o(Y)$ lift to automorphisms of $Z$. 
But this does not generally hold for elements of $\Aut(Y)$.
For a very simple example, take $k = \bC$, $Z$ the elliptic curve
$\bC/2\bZ + i \bZ$, $Y$ the elliptic curve $\bC/\bZ + i \bZ$,
and $\pi$ the natural morphism. Then the multiplication
by $i$ defines an automorphism of $Y$ which admits no lift
under the double cover $\pi$.
\end{remark}

We now come to the main result of this section:

\begin{theorem}\label{thm:abel}
Let $G$ be an abelian variety and $\pi : X \to Y$ a $G$-torsor,
where $X$ and $Y$ are complete varieties. Then $G$ 
centralizes $\Aut^o(X)$; equivalently, $\Aut^o(X) = \Aut^{G,o}(X)$.
Moreover, there exists a closed subgroup $H \subset \Aut^o(X)$
such that $\Aut^o(X) = GH$ and $G \cap H$ is finite.

If $k$ has characteristic $0$ and $X$ (or equivalently $Y$) is 
smooth, then the homomorphism
$\pi_*: \Aut^G(X) \longrightarrow \Aut(Y)$
restricts to an isogeny $\pi_{* \vert H}: H \to \Aut^o(Y)$
for any quasi-complement $H$ as above.
\end{theorem}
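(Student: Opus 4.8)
The plan is to establish the three assertions in turn, using Corollary \ref{cor:dir} to get the homomorphism $\pi_* : \Aut^o(X) \to \Aut^o(Y)$ (valid since the fibres of $\pi$ are abelian varieties, so $\pi_*\cO_X = \cO_Y$), and saving the surjectivity of $\pi_*$ for last as the crux.

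\emph{Centralization.} The free torsor action embeds $G$ as a closed connected subgroup of $\Aut^o(X)$, namely the translations $t_g : x \mapsto g\cdot x$. For $\varphi \in \Aut^o(X)$ I would note that $\pi_*(t_g) = \id_Y$, since translations are relative automorphisms; hence $\pi_*(\varphi\,t_g\,\varphi^{-1}) = \id_Y$ and so $\varphi\,t_g\,\varphi^{-1} \in \Aut_Y(X)\cap\Aut^o(X)$. By Lemma \ref{lem:autrel}, $\Aut_Y(X)\cong\Hom(X,G)$; as $X$ is complete and $G$ is an abelian variety, every morphism $X\to G$ is the sum of a homomorphism $A(X)\to G$ and a constant, so $\Hom(X,G)\cong\Hom_{\gp}(A(X),G)\times G$, with neutral component the translations $G$. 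Thus conjugation by $\varphi$ carries $G$ into $\Aut_Y(X)^o = G$, giving $\varphi\,t_g\,\varphi^{-1} = t_{\rho_\varphi(g)}$ for a unique $\rho_\varphi\in\Aut_{\gp}(G)$. The induced homomorphism $\varphi\mapsto\rho_\varphi$ from the connected group $\Aut^o(X)$ to the \'etale group scheme $\Aut_{\gp}(G)$ is constant equal to the identity, which is the centralization. Since $\Aut^G(X)$ is exactly the centralizer of the translation subgroup in $\Aut(X)$, this is equivalent to $\Aut^o(X)\subseteq\Aut^G(X)$, i.e. to $\Aut^o(X)=\Aut^{G,o}(X)$.

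\emph{Quasi-complement.} Writing $\cG := \Aut^o(X)$, I would invoke its Chevalley structure $1\to\cG_{\aff}\to\cG\to A(\cG)\to 1$. As $G$ is an abelian variety, $G\cap\cG_{\aff}$ is finite and the image $\bar G$ of $G$ in $A(\cG)$ is an abelian subvariety; by Poincar\'e reducibility there is a complementary abelian subvariety $B\subset A(\cG)$ with $\bar G + B = A(\cG)$ and $\bar G\cap B$ finite. I take $H$ to be the preimage of $B$ in $\cG$: it is connected (an extension of $B$ by $\cG_{\aff}$), contains $\cG_{\aff}$, and together with $G$ surjects onto $A(\cG)$, so (using that $G$ is central, whence $GH$ is a subgroup) $\cG = GH$; moreover $G\cap H$ is finite, being finite over $\bar G\cap B$ with finite kernel $G\cap\cG_{\aff}$.

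\emph{Isogeny.} First I identify $K := \ker\!\big(\pi_*:\Aut^o(X)\to\Aut^o(Y)\big)$: it consists of the relative automorphisms lying in $\Aut^o(X)$, which by the first assertion are relative \emph{equivariant} automorphisms, so $K\subseteq\Aut^G_Y(X)\cong\Hom_{\gp}(A(Y),G)\times G$ by Proposition \ref{prop:alb}; since $K$ contains the translations $G$, the neutral component of $\Aut^G_Y(X)$, we get $K^o = G$. The heart of the matter is the surjectivity of $\pi_*$. In characteristic $0$ with $X$ smooth, Corollary \ref{cor:fact} (with $G_{\aff}=1$, $A(G)=G$) shows that $\pi$ is isotrivial: it admits a reduction of structure group to a finite subgroup $F\subset G$, so $X\cong G\times^F Z'$ with $Z'\to Y$ finite \'etale, which we may take connected and hence Galois with group $F$. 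By Lemma \ref{lem:gal} the map $\Aut^o(Z',Y)\to\Aut^o(Y)$ is surjective, and $\Aut^o(Z',Y)\subseteq\Aut^F(Z')$; thus each $\sigma\in\Aut^o(Z',Y)$ induces $[g,z]\mapsto[g,\sigma(z)]$ on $X=G\times^F Z'$, a $G$-equivariant automorphism lying over the image of $\sigma$ in $\Aut^o(Y)$. This homomorphism $\Aut^o(Z',Y)\to\Aut^{G,o}(X)=\Aut^o(X)$ shows $\Aut^o(Y)\subseteq\pi_*(\Aut^o(X))$, so $\pi_*$ is onto. Finally, for any quasi-complement $H$, since $\pi_*(G)=\{\id_Y\}$ we get $\pi_*(H)=\pi_*(GH)=\Aut^o(Y)$, while $\ker(\pi_*|_H)=H\cap K$ is finite (because $H\cap K^o = H\cap G$ is finite and $K/K^o$ is finite); hence $\pi_*|_H:H\to\Aut^o(Y)$ is a surjective homomorphism of connected algebraic groups with finite kernel, i.e. an isogeny. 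The main obstacle is this surjectivity, and the decisive device is to convert the abelian-variety torsor, via isotriviality in characteristic $0$, into a finite \'etale cover where lifting is controlled by Lemma \ref{lem:gal}.
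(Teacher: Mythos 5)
Your proof is correct and, for the substantive part (the surjectivity of $\pi_{* \vert H}$), follows exactly the paper's route: reduction of structure group to a finite subgroup $F \subset G$ via Corollary \ref{cor:fact} and Lemma \ref{lem:ind}, Lemma \ref{lem:gal} applied to the finite \'etale $F$-torsor $Z' \to Y$, and the lift of $\Aut^{F,o}(Z')$ to $G$-equivariant automorphisms of $X \cong G \times^F Z'$, together with the identification of $G$ as the neutral component of $\ker \pi_*$ via Proposition \ref{prop:alb}. The only divergence is that you establish the centrality of $G$ in $\Aut^o(X)$ and the existence of a quasi-complement from first principles (a rigidity argument on $\Aut_Y(X)^o = G$, then Chevalley's theorem plus Poincar\'e reducibility), where the paper simply cites the Corollary on p.~434 of \cite{Ro} for both assertions.
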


\begin{proof}
The assertion that $G$ is central in $\Aut^o(X)$ and admits
a quasi-complement follows from \cite[Corollary, p. 434]{Ro}. 

By Proposition \ref{prop:autrel} or alternatively Proposition 
\ref{prop:alb}, $G$ is the neutral component of the 
kernel of $\pi_*$. Thus, the kernel of $\pi_{* \vert H}$ is finite.

It remains to show that $\pi_{* \vert H}$ is surjective when 
$k$ has characteristic $0$ and $X$ is smooth. 
By Lemma \ref{lem:ind} and Corollary \ref{cor:fact}, 
we have a $G$-isomorphism 
\begin{equation}\label{eqn:rsf}
X \cong G \times^F Z 
\end{equation}
for some finite subgroup $F \subset G$ and some closed 
$F$-stable subscheme $Z\subset X$ such that $\pi : Z \to Y$ 
is an $F$-torsor. Thus, $Z$ is smooth and complete.
Replacing $Z$ with a component, and $F$ with the
stabiliser of that component, we may assume that
$F$ is a variety. Then by Lemma \ref{lem:gal}, the natural 
homomorphism $\Aut^{F,o}(Z) \to \Aut^o(Y)$ is surjective.

We now claim that $\Aut^F(Z)$ may be identified with a 
closed subgroup of $\Aut^G(X)$.
Indeed, as in the proof of Theorem \ref{thm:fin}, any 
$\varphi \in \Aut^F(X)$ yields a morphism
$$
\phi: G \times Z \longrightarrow G \times Z, \quad
(g,z) \longmapsto \big( g, \varphi(z) \big).
$$
This is a $G \times F$-automorphism of 
$X \times Z$, and hence descends to a $G$-automorphism 
$\Phi$ of $X$. The assignement $\varphi \mapsto \Phi$
yields the desired identification. This proves the claim
and, in turn, the surjectivity of $\pi_{* \vert H}$.
\end{proof}

\begin{remarks}
(i) With the notation and assumptions of the preceding
theorem, the surjectivity of $\pi_{*\vert H}$ also holds 
when $X$ (or equivalently $Y$) is normal. Choose indeed an 
$\Aut^o(Y)$-equivariant desingularization
$$
f : Y' \longrightarrow Y,
$$ 
that is, $f$ is proper and birational, and the action of 
$\Aut^o(Y)$ on $Y$ lifts to an action on $Y'$ such that
$f$ is equivariant (see \cite{EV} for the existence of 
such desingularizations). Since $Y$ is normal, we have
$f_*(\cO_{Y'}) = \cO_Y$. In view of Proposition 
\ref{prop:blan}, this yields a homomorphism
$$
f_* : \Aut^o(Y') \longrightarrow \Aut^o(Y)
$$
which is injective (on closed points) as $f$ is birational,
and surjective by construction. Thus, $f_*$ is an 
isomorphism. Likewise, the natural map
$\Aut^o(X') \to \Aut^o(X)$ is an isomorphism,
where $X' := X \times_Y Y'$ is the total space of the
pull-back torsor $\pi' : X' \to Y'$. Now the desired
surjectivity follows from Theorem \ref{thm:abel}.

We do not know whether $\pi_{*\vert H}$ is surjective 
for arbitrary (complete) varieties $X,Y$. Also, we 
do not know whether the characteristic-$0$ assumption  
can be omitted. 

\smallskip

\noindent
(ii) The preceding theorem may be reformulated in terms
of vector fields only: let $X,Y$ be smooth complete varieties 
over an algebraically closed field of characteristic $0$, 
and $\pi : X \to Y$ a smooth morphism such that the relative 
tangent bundle $T_{\pi}$ is trivial. Then every global vector 
on $Y$ lifts to a global vector field on $X$. 

Consider indeed the Stein factorization of $\pi$,
$$
\CD
X @>{\pi'}>> X' @>{p}>> Y.
\endCD
$$
Then one easily checks that $p$ is \'etale; thus,
$X'$ is smooth and $\pi'$ is smooth with trivial
relative tangent bundle. Also, every global vector
field on $Y$ lifts to a global vector field on $X'$,
as follows e.g. from Lemma \ref{lem:gal}. Thus, we may 
replace $\pi$ with $\pi'$, and hence assume that 
the fibers of $\pi$ are connected. Then
these fibers are just the orbits of $G:= \Aut^o_Y(X)$,
an abelian variety. Moreover, for $F$ and $Z$ as in 
(\ref{eqn:rsf}), the restriction $\pi_{\vert Z}$ is smooth, 
since so is $\pi$. Thus, $\pi_{\vert Z}$ is an $F$-torsor. 
So the claim follows again from Theorem \ref{thm:abel}.
\end{remarks}

Finally, using the factorization (\ref{eqn:red}) and 
combining Lemma \ref{lem:gal} and Theorem \ref{thm:abel}, 
we obtain the following:

\begin{corollary}\label{cor:surj}
Let $G$ be a proper algebraic group and $\pi: X \to Y$
a $G$-torsor, where $Y$ is a complete variety
over an algebraically closed field of characteristic $0$.
Then there exists a closed connected subgroup 
$H \subset \Aut^G(X)$ which is isogenous to 
$\Aut^o(Y)$ via $\pi_* : \Aut^G(X) \to \Aut(Y)$.
\end{corollary}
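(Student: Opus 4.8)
The plan is to exploit the factorization (\ref{eqn:red}) and treat its two pieces with the two main results of this section. Since $G$ is a proper algebraic group, its neutral component $G^o = G^o_{\red}$ is a connected complete algebraic group, hence an abelian variety, while $F := G/G^o$ is a finite group, which is \'etale as $k$ has characteristic $0$. As $\pi$ is proper and $Y$ is complete, $X$ is complete as well. By (\ref{eqn:red}) together with Theorem \ref{thm:fact}(ii), $\pi$ factors as a $G^o$-torsor $p : X \to Z := X/G^o$ followed by a finite $F$-torsor $q : Z \to Y$, that is, a Galois cover with group $F$. First I would pass to a connected component of $X$ and replace $G$ by the stabiliser of that component, exactly as in the proof of Theorem \ref{thm:abel}, so as to reduce to the case that $X$ and $Z$ are varieties; using an equivariant desingularisation (or first a normalisation) of $Y$ as in the Remarks following Theorem \ref{thm:abel}, I would further assume that $X$ and $Z$ are smooth.

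Next I would apply Theorem \ref{thm:abel} to the $G^o$-torsor $p : X \to Z$. This yields that $G^o$ is central in $\Aut^o(X) = \Aut^{G^o,o}(X)$, that $G^o$ is the neutral component of $\ker\big(p_* : \Aut^{G^o}(X) \to \Aut(Z)\big)$, and, through (\ref{eqn:lie}), that the induced map $\Gamma(X,T_X)^{G^o} \to \Gamma(Z,T_Z)$ on global vector fields is surjective with kernel $\Lie(G^o)$. The finite group $F$ acts on all of these objects, and, since the $G$-action on $X$ descends to the $F$-action on $Z = X/G^o$, the map $p_*$ carries $\Aut^{G}(X) = \Aut^{G^o}(X)^F$ into the $F$-equivariant automorphisms $\Aut^F(Z)$, hence on neutral components into $\Aut^{F,o}(Z)$. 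Taking $F$-invariants of the surjection on vector fields and using that $\Lie(G^o)$ is a $\bQ$-vector space, so that $H^1(F,\Lie(G^o)) = 0$, I obtain that $\Gamma(X,T_X)^{G} \to \Gamma(Z,T_Z)^F$ is surjective; as we are in characteristic $0$ and both sides are the Lie algebras of connected groups, this gives the surjectivity of $p_* : \Aut^{G,o}(X) \to \Aut^{F,o}(Z)$.

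I would then invoke Lemma \ref{lem:gal} for the Galois cover $q : Z \to Y$: it identifies $\Aut^{F,o}(Z)$ with $\Aut^o(Z,Y)$ and shows that $q_* : \Aut^o(Z,Y) \to \Aut^o(Y)$ is an isogeny. Composing, the homomorphism $\pi_* = q_* \circ p_* : \Aut^{G,o}(X) \to \Aut^o(Y)$ is surjective. Its kernel has neutral component contained in $\ker(p_*)^o = G^o$, since $q_*$ has finite kernel on $\Aut^{F,o}(Z)$; thus $\ker(\pi_*)^o$ is a closed connected subgroup of $G^o$, hence a central abelian subvariety of $\Aut^o(X)$. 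By Rosenlicht's quasi-complement result \cite[Corollary, p.~434]{Ro}, there is a closed connected subgroup $H \subset \Aut^{G,o}(X) \subset \Aut^G(X)$ meeting this kernel in a finite group and with $\Aut^{G,o}(X) = \ker(\pi_*)^o \cdot H$. Then $\pi_{*\vert H} : H \to \Aut^o(Y)$ is surjective with finite kernel, i.e. the desired isogeny.

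The step I expect to be the main obstacle is the passage from $G^o$-equivariance on $X$ to the simultaneous $G$-equivariance on $X$ and $F$-equivariance on $Z$, namely the surjectivity of $p_*$ onto the subgroup $\Aut^{F,o}(Z) = \Aut^o(Z,Y)$ rather than merely onto $\Aut^o(Z)$. This is precisely where characteristic $0$ enters twice: through the vanishing of $H^1(F,\Lie G^o)$, which lets me lift $F$-invariant vector fields, and through the Lie-algebra-to-group correspondence. Secondary care is required in the preliminary reductions to the case that $X$ and $Z$ are smooth varieties, and in verifying that the quasi-complement $H$ furnished by Rosenlicht's theorem genuinely lies in $\Aut^G(X)$ and not merely in $\Aut^o(X)$.
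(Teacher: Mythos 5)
Your proof is correct and follows exactly the route the paper indicates for this corollary: the factorization (\ref{eqn:red}) of $\pi$ into a $G^o$-torsor followed by a finite \'etale $F$-torsor, with Theorem \ref{thm:abel} handling the abelian-variety part and Lemma \ref{lem:gal} the Galois cover. The paper offers no details beyond citing these ingredients, and your argument via the vanishing of $H^1\big(F, \Lie(G^o)\big)$ is a correct way to carry out the one genuinely delicate step (upgrading $G^o$-equivariant lifts to $G$-equivariant ones landing in $\Aut^{F,o}(Z)$); note only that, like Theorem \ref{thm:abel} on which it rests, the argument needs $Y$ smooth or at least normal --- a hypothesis the corollary's statement omits but which the paper itself acknowledges (in the remarks following Theorem \ref{thm:abel}) it does not know how to remove.
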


Here the assumption that $G$ is proper cannot be omitted.
For example, let $Y$ be an abelian variety, so that
$\Aut^o(Y)$ is the group of translations. Let also
$G$ be the multiplicative group $\bG_m$, so that 
$G$-torsors $\pi : X \to Y$ correspond bijectively to 
invertible sheaves $\cL$ on $Y$. Then $\Aut^o(Y)$ 
lifts to an isomorphic (resp. isogenous) subgroup of 
$\Aut^G(X)$ if and only if $\cL$ is trivial (resp. 
of finite order). Also, the image of $\pi_*$ contains 
$\Aut^o(Y)$ if and only if $\cL$ is algebraically trivial
(see \cite{Mu} for these results).
 
This is the starting point of the theory of homogeneous 
bundles over abelian varieties, developed in \cite{Br2}.

\address{Universit\'e de Grenoble I,\\
Institut Fourier, CNRS UMR 5582\\
B.P. 74\\
38402 Saint-Martin d'H\`eres Cedex\\
France}
{Michel.Brion@ujf-grenoble.fr}

\end{document}